\documentclass[reqno,oneside]{amsart}

\usepackage[danish,english]{babel} 
\usepackage[latin1]{inputenc} 
\usepackage[T1]{fontenc} 
\usepackage{times}
\usepackage{amsmath,amsthm,amssymb,amsfonts,mathrsfs,latexsym} 
\usepackage{graphicx} 
\usepackage{verbatim} 
\usepackage[all]{xy} 
\usepackage[pagebackref,colorlinks, linkcolor=blue, citecolor=blue, urlcolor=blue, hypertexnames=true]{hyperref}  %

\usepackage{multirow}
\usepackage{appendix}



\setcounter{tocdepth}{1}

\makeatletter
\def\namedlabel#1#2{\begingroup
    #2%
    \def\@currentlabel{#2}%
    \phantomsection\label{#1}\endgroup
}
\makeatother


\DeclareFontFamily{U}{mathx}{\hyphenchar\font45}
\DeclareFontShape{U}{mathx}{m}{n}{
      <5> <6> <7> <8> <9> <10>
      <10.95> <12> <14.4> <17.28> <20.74> <24.88>
      mathx10
      }{}
\DeclareSymbolFont{mathx}{U}{mathx}{m}{n}
\DeclareFontSubstitution{U}{mathx}{m}{n}
\DeclareMathAccent{\widecheck}{0}{mathx}{"71}
\DeclareMathAccent{\wideparen}{0}{mathx}{"75}
\renewcommand\check\widecheck
\renewcommand\tilde\widetilde

\makeatletter
\newtheorem*{rep@theorem}{\rep@title}
\newcommand{\newreptheorem}[2]{%
\newenvironment{rep#1}[1]{%
 \def\rep@title{#2 \ref{##1}}%
 \begin{rep@theorem}}%
 {\end{rep@theorem}}}
\makeatother

\newtheorem{thm}[equation]{Theorem}
\newreptheorem{thm}{Theorem}
\newtheorem{lem}[equation]{Lemma}
\newtheorem{prop}[equation]{Proposition}
\newtheorem{cor}[equation]{Corollary}
\newtheorem*{thm*}{Theorem}

\theoremstyle{definition}

\newtheorem{exam}[equation]{Example}
\newtheorem{rem}[equation]{Remark}
\newtheorem{que}{Question}

\newcommand\mc\mathcal
\newcommand\ms\mathscr
\newcommand\mb\mathbb
\newcommand\mf\mathfrak
\newcommand\mr\mathrm
\newcommand\us\textup

\newcommand{\C}{\mathbb{C}}
\newcommand{\N}{\mathbb{N}}
\newcommand{\Q}{\mathbb{Q}}
\newcommand{\R}{\mathbb{R}}
\newcommand{\Z}{\mathbb{Z}}

\newcommand\la\langle
\newcommand\ra\rangle
\renewcommand\epsilon\varepsilon
\renewcommand\phi\varphi
\renewcommand\rho\varrho
\renewcommand\tilde\widetilde
\renewcommand\hat\widehat
\newcommand\acts\curvearrowright

\DeclareMathOperator{\GL}{GL}
\DeclareMathOperator{\SL}{SL}
\DeclareMathOperator{\UL}{UL}
\DeclareMathOperator{\VN}{VN}
\DeclareMathOperator{\Ind}{Ind}

\numberwithin{equation}{section}
\parindent 0cm
\parskip 4pt

\begin{document}
\selectlanguage{english} 

\begin{abstract}
We study locally compact groups for which the Fourier algebra coincides with the Rajchman algebra. In particular, we show that there exist uncountably many non-compact groups with this property. Generalizing a result of Hewitt and Zuckerman, we show that no non-compact nilpotent group has this property, whereas non-compact solvable groups with this property are known to exist.

We provide several structural results on groups whose Fourier and Rajchman algebras coincide as well as new criteria for establishing this property.

Finally, we study the relation between groups with completely reducible regular representation and groups whose Fourier and Rajchman algebras coincide. For unimodular groups with completely reducible regular representation, we show that the Fourier algebra may in general be strictly smaller than the Rajchman algebra.
\end{abstract}


\title[Groups whose Fourier algebra and Rajchman algebra coincide]{Groups whose Fourier algebra\\ and Rajchman algebra coincide}
\author{S{\o}ren Knudby}
\address{Mathematisches Institut der WWU M\"unster,
\newline Einsteinstra\ss{}e 62, 48149 M\"unster, Germany.}
\email{knudby@uni-muenster.de}
\thanks{Supported by the Deutsche Forschungsgemeinschaft through the Collaborative Research Centre (SFB 878).}
\date{\today}
\maketitle

\section{Introduction}
Over the years there has been considerable interest in studying locally compact groups with completely reducible regular representation, that is, locally compact groups whose regular representation decomposes as a direct sum of irreducible representations \cite{MR735532}, \cite{MR509261}, \cite{MR552704}, \cite{MR548088}, \cite{MR2459312}. By the Peter-Weyl theorem, compact groups are examples of such groups. It may come as a surprise that these are not the only ones.  Indeed, in the abelian case it is an easy consequence of the Pontryagin duality theorem that the regular representation of a locally compact abelian group decomposes as a direct sum of irreducible representations if and only if the group is compact.

The study of locally compact groups with completely reducible (also called purely atomic) regular representation is related to the study of certain function algebras associated with the groups. We now describe these algebras (see Section~\ref{sec:prelim} for details).

For a locally compact group $G$, we let $B(G)$ denote the \emph{Fourier-Stieltjes algebra} consisting of the matrix coefficients of strongly continuous unitary representations of $G$. The \emph{Fourier algebra} $A(G)$ is the subalgebra of $B(G)$ consisting of the matrix coefficients of the (left) regular representation $\lambda$. It is always the case that $A(G)\subseteq B(G) \cap C_0(G)$, and often the inclusion is strict. Here $C_0(G)$ denotes the (complex) continuous functions on $G$ vanishing at infinity. The \emph{Rajchman algebra} $B_0(G)$, which is simply defined as the intersection
$$
B_0(G) = B(G)\cap C_0(G),
$$
has recently gained renewed interest (see \cite{MR3071703,MR3211015,MR3473390}). But already in 1966, Hewitt and Zuckerman \cite{MR0193435} showed that for non-compact abelian groups $G$, $A(G)\neq B_0(G)$. This generalized a result of Menchoff \cite{menchoff} from 1916, who showed the same for $G = \Z$.

The main objective of the current paper is to study the inclusion $A(G)\subseteq B_0(G)$ and in particular to study when this inclusion can or cannot be an equality:
\begin{align}\label{eq:AB0}
A(G) = B_0(G).\tag{$\star$}
\end{align}
Our first contribution is a generalization of the result of Hewitt and Zuckerman from abelian groups to nilpotent groups. We thus prove the following.
\begin{repthm}{thm:nilpotent}
If $G$ is a nilpotent, locally compact group then $A(G)\neq B_0(G)$ unless $G$ is compact.
\end{repthm}

The above theorem cannot be generalized to solvable groups. Indeed, Khalil \cite[p.~165]{MR0350330} showed that the $ax+b$ group, which is non-compact and solvable, satisfies $A(G) = B_0(G)$.

Non-compact groups which satisfy $A(G)=B_0(G)$ are generally viewed as exceptional, although several examples appeared recently in \cite{K-fourier} and \cite[Theorem~2.1]{MR2323448}. Our second contribution is to show that there are many such groups. We prove the following.

\begin{repthm}{thm:uncountable}
There exist uncountably many (non-isomorphic) second countable locally compact groups $G$ such that $A(G) = B_0(G)$ and $G$ has no compact subgroups (apart from the trivial group).
\end{repthm}

In \cite{MR0493175}, Fig{\`a}-Talamanca studied the Rajchman algebra in relation to having completely reducible regular representation. He proved that if a unimodular group $G$ satisfies \eqref{eq:AB0}, then the regular representation of $G$ is completely reducible. Subsequently, Baggett and Taylor generalized Fig{\`a}-Talamanca's result to include non-unimodular groups \cite[Theorem~2.1]{MR552704}. They proved
\begin{thm*}[\cite{MR552704}]\label{thm:Baggett+Taylor}
If $A(G) = B_0(G)$ for a second countable locally compact group $G$, then the regular representation of $G$ is completely reducible.
\end{thm*}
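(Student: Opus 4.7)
The plan is to prove the contrapositive: assume $\lambda$ is not completely reducible and exhibit a function in $B_0(G) \setminus A(G)$.

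If $\lambda$ fails to be completely reducible, then there is a nonzero $\lambda$-invariant closed subspace $K \subseteq L^2(G)$ containing no irreducible invariant subspace. By second countability, $L^2(G)$ is separable and $\lambda|_K$ admits a central disintegration $\lambda|_K \cong \int^\oplus n(\pi)\,\pi\, d\mu(\pi)$ over the reduced dual $\hat{G}_r$, with $\mu$ a standard Borel measure. The absence of an irreducible subrepresentation in $K$ forces $\mu$ to be non-atomic. I then pick a representative $\pi_0$ in the support of $\mu$; since $\mu(\{\pi_0\}) = 0$, the representation $\pi_0$ is weakly contained in, but not unitarily equivalent to any subrepresentation of, $\lambda$.

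Any nonzero coefficient $\phi$ of $\pi_0$ automatically lies in $B(G)$ (in fact, in $B_\lambda(G)$), and if such a $\phi$ can be taken in $C_0(G)$, then $\phi \in B_0(G)$. The hypothesis $A(G) = B_0(G)$ would then place $\phi$ in $A(G)$, and the standard correspondence between positive coefficients in $A(G)$ and cyclic subrepresentations of $\lambda$ (via the GNS construction inside $L^2(G)$) would force $\pi_0$ to embed unitarily in $\lambda$, contradicting the choice of $\pi_0$. So the whole argument reduces to producing one nonzero coefficient of $\pi_0$ lying in $C_0(G)$.

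This last step is the main obstacle, because for an arbitrary irreducible representation weakly contained in $\lambda$, coefficients need not decay at infinity. My first attempt would exploit the disintegration itself: for $\xi = \int^\oplus \xi_\pi\, d\mu(\pi) \in K$ the coefficient $g \mapsto \langle \lambda(g)\xi,\xi\rangle$ lies in $A(G) \subseteq C_0(G)$ and equals $\int \langle \pi(g)\xi_\pi,\xi_\pi\rangle\, d\mu(\pi)$; by localizing the section $\pi \mapsto \xi_\pi$ near $\pi_0$ (possible because $\mu$ is diffuse) and applying a Fatou-type argument one should extract an individual fiber coefficient $g \mapsto \langle\pi_0(g)\xi_{\pi_0},\xi_{\pi_0}\rangle$ that still vanishes at infinity. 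In the non-unimodular setting addressed by Baggett and Taylor one must additionally replace the tracial Plancherel weight by a semifinite weight on $VN(G)$ and keep track of the modular automorphism group; this measure-theoretic and modular bookkeeping, rather than the GNS end of the argument, is where I expect the real technical difficulty to sit.
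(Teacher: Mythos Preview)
The paper does not give a proof of this theorem; it quotes it from Baggett and Taylor \cite{MR552704} as background, so there is no proof here to compare against.

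Your proposal has a genuine gap, and it sits exactly where you place it. The ``Fatou-type'' extraction of a $C_0$ coefficient of a single fiber $\pi_0$ cannot work in general. Take $G=\mathbb{R}$: the regular representation is not completely reducible, the central disintegration is $\lambda=\int_{\hat{\mathbb{R}}}\chi_t\,dt$ with each fiber a character $\chi_t$, and no nonzero coefficient of any $\chi_t$ lies in $C_0(\mathbb{R})$. Your procedure therefore produces nothing, even though $B_0(\mathbb{R})\setminus A(\mathbb{R})$ is nonempty (Menchoff, Hewitt--Zuckerman). The elements of $B_0\setminus A$ in that case are Fourier--Stieltjes transforms of singular continuous measures on $\hat{\mathbb{R}}$; they come from the diffuse part of the disintegration \emph{as a whole}, not from any individual fiber. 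Localizing the section near $\pi_0$ and passing to a limit does not rescue this: $C_0(G)$ is not closed under pointwise or weak-$*$ limits of bounded nets, and there is no Fatou-type principle that transfers $C_0$ decay from an integral of positive definite functions to a single integrand. The actual argument in \cite{MR552704} does not attempt to isolate a single irreducible at all; it works with the operator-algebraic structure of $\VN(G)$ and its predual.

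A secondary issue: for non-type-I $G$ the central disintegration yields factor representations rather than irreducibles, so the later step (``a positive coefficient of $\pi_0$ in $A(G)$ forces $\pi_0\hookrightarrow\lambda$'') would also need repair --- one only gets that the GNS piece embeds in a multiple of $\lambda$, which does not immediately contradict $\mu(\{\pi_0\})=0$. But the first gap is already decisive.
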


At some point, people speculated that the converse of the above theorem should hold, that is, that all groups with completely reducible regular representation should satisfy \eqref{eq:AB0}. This is not the case, as was shown by Baggett and Taylor \cite{MR509261}. They produced a non-unimodular group with completely reducible regular representation not satisfying \eqref{eq:AB0}. At the same time Baggett and Taylor suggested that the converse of the above theorem should hold for unimodular groups. Our third contribution is to provide an example of a unimodular group whose regular representation is completely reducible, but where \eqref{eq:AB0} fails, thus supplementing the example from \cite{MR509261} and answering (in the negative) the question about unimodularity raised there.

\begin{repthm}{thm:AR-non-AB0}
There exists a unimodular locally compact group $G$ whose regular representation is completely reducible but nevertheless $A(G)\neq B_0(G)$.
\end{repthm}

Our final, and perhaps most substantial, contribution is of a more structural nature. All groups currently known to satisfy \eqref{eq:AB0} match the conditions of \cite[Theorem~4]{K-fourier} (see Theorem~\ref{thm:irrep-single} below). We show in this paper that there are also groups satisfying \eqref{eq:AB0} that do not match the conditions of that theorem. At the same time, we study how the condition \eqref{eq:AB0} behaves with respect to taking direct products of groups. It is not clear (at least to the author) if the condition \eqref{eq:AB0} is preserved under taking direct products, although we suspect this to be the case. We are, however, able to prove that a finite direct product satisfies \eqref{eq:AB0} provided all the factors are among the groups for which \eqref{eq:AB0} is currently known to hold. We investigate this in Sections~\ref{sec:structure}--\ref{sec:improvement}.

The paper is organized as follows. Section~\ref{sec:prelim} contains some preliminaries on the Fourier and Fourier-Stieltjes algebras of locally compact groups. In Section~\ref{sec:rank-two}, we show that the results from \cite{K-fourier} concerning parabolic subgroups in real rank one simple Lie groups do not generalize to higher rank simple Lie groups. In Section~\ref{sec:uncountable}, we exhibit uncountably many groups satisfying \eqref{eq:AB0}. Sections~\ref{sec:structure}--\ref{sec:improvement} contain the structural results mentioned above. In particular, in Theorem~\ref{thm:irrep-countable} we provide a generalization of \cite[Theorem~4]{K-fourier}. Section~\ref{sec:AR-example} contains our example of a unimodular locally compact group $G$ whose regular representation is completely reducible but nevertheless $A(G)\neq B_0(G)$. We end the paper with some remarks and questions.


\section{Preliminaries}\label{sec:prelim}
To avoid to much repetition we will use the following conventions (although we sometimes partly repeat assumptions for emphasis or clarity): By a group we mean a Hausdorff locally compact group. By a representation of a group we mean a strongly continuous, unitary representation.

Throughout, $G$ will denote a locally compact group. The \emph{Fourier-Stieltjes algebra} $B(G)$ was introduced by Eymard in \cite{MR0228628} to which we refer for details. The algebra $B(G)$ can be described as the matrix coefficients of continuous unitary representations,
$$
B(G) = \{ \la \pi(\cdot)x,y \ra \mid \pi \text{ is a representation of } G \text{ on } H_\pi \text{ and } x,y\in H_\pi \}.
$$
For a function $\phi\in B(G)$, the norm $\|\phi\|_B$ is defined as the infimum $\inf\{\|x\|\|y\|\}$, where the infimum is taken over all representations $(\pi,H_\pi)$ and vectors $x,y\in H_\pi$ such that
$$
\phi(g) = \la \pi(g)x,y\ra, \quad\text{for all } g\in G.
$$
The infimum is attained. With pointwise multiplication $B(G)$ is a unital Banach algebra. The (continuous) positive definite functions are precisely the functions $\phi$ of the form
$$
\phi(g) = \la\pi(g)x,x\ra, \quad\text{for all } g\in G,
$$
and the positive definite functions span $B(G)$.

If $C_c(G)$ denotes the continuous complex functions on $G$ with compact support, then $B(G)\cap C_c(G)$ is an ideal in $B(G)$. Its closure in $B(G)$ is the \emph{Fourier algebra} $A(G)$, which is a closed ideal in $B(G)$. The Fourier algebra can also be described as the coefficient functions of the regular representation $\lambda$ on $L^2(G)$,
$$
A(G) = \{ \la \lambda(\cdot)x,y \ra \mid x,y\in L^2(G) \}.
$$
The norm $\|\ \|_B$ majorizes the uniform norm, and therefore $A(G)\subseteq C_0(G)$. For the same reason, the \emph{Rajchman algebra} $B_0(G) = B(G)\cap C_0(G)$ is also a closed ideal in $B(G)$.

The Rajchman algebra $B_0(G)$ is the linear span of the positive definite functions in $B_0(G)$. This can be seen from \cite[Lemme~2.12]{MR0228628}, since $B_0(G)$ is a translation invariant subspace of $B(G)$.

We will use the notation $\hat G$ for the unitary dual of $G$. Thus, $\hat G$ consists of the irreducible representations of $G$ up to unitary equivalence. When $G$ is second countable, the space $\hat G$ is equipped with the Mackey Borel structure which turns $\hat G$ into a Borel space (see \cite{MR0089999}).

The locally compact group $G$ is of \emph{type~I} if every unitary representation of $G$ generates a type~I von Neumann algebra. Glimm \cite{MR0124756} gave several equivalent formulations of the type~I condition, which can also be found in \cite[\S9.1]{MR0458185}, \cite[\S9.5]{MR0458185}, and \cite[Theorem~7.6]{MR1397028}. We extract the following:

A locally compact group is of type~I if and only if the image of its universal C$^*$-algebra under any irreducible representation contains the compact operators. When $G$ is second countable, $G$ is of type~I if and only if the Mackey Borel structure on $\hat G$ is standard.

The following theorem is a useful tool to establish the equality $A(G) = B_0(G)$ for type~I groups. We will rely on this theorem in Section~\ref{sec:uncountable}, when we provide an uncountable family of non-compact groups $G$ such that $A(G) = B_0(G)$.

\begin{thm}[\cite{K-fourier}]\label{thm:irrep-single}
Let $G$ be a second countable locally compact group. Suppose $G$ is of type~I and satisfies the following condition:
\begin{enumerate}
	\item[\namedlabel{cond:irrep-single}{I}.]  There is a non-compact, closed subgroup $H$ of $G$ such that every irreducible unitary representation of $G$ is either trivial on $H$ or is a subrepresentation of the left regular representation of $G$.
\end{enumerate}
Then
$$
A(G) = B_0(G).
$$
\end{thm}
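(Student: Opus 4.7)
The plan is as follows. Since $B_0(G)$ is spanned by its positive definite elements (as recalled in Section~\ref{sec:prelim}), it suffices to show that every positive definite $\phi \in B_0(G)$ lies in $A(G)$. Write $\phi(g) = \la \pi(g)x,x\ra$ for a cyclic representation $(\pi,x)$. Using second countability and type~I, I would disintegrate
\[
\pi \;\cong\; \int^\oplus_{\hat G} \pi_\xi\, d\mu(\xi), \qquad x \;\cong\; \int^\oplus x_\xi\, d\mu(\xi),
\]
and partition $\hat G = A \sqcup B$, where $A = \{\xi : \pi_\xi \text{ is trivial on } H\}$ and $B$ is its complement. By condition~\ref{cond:irrep-single}, each $\xi \in B$ corresponds to an irreducible subrepresentation of $\lambda$. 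The induced splitting is $\phi = \phi_A + \phi_B$, with both summands positive definite.

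The first main step is to show $\phi_B \in A(G)$. The crux is that $B$ is countable: for a second countable type~I group, $\pi_\xi$ is a (direct summand, hence) subrepresentation of $\lambda$ precisely when $\{\xi\}$ is an atom of the Plancherel measure, and a $\sigma$-finite measure on the standard Borel space $\hat G$ has at most countably many atoms. Consequently $\mu|_B$ is purely atomic and
\[
\phi_B(g) \;=\; \sum_{\xi \in B} \mu(\{\xi\})\,\la \pi_\xi(g) x_\xi, x_\xi \ra.
\]
Each summand is a matrix coefficient of a subrepresentation of $\lambda$, hence lies in $A(G)$ with $A$-norm at most $\mu(\{\xi\})\|x_\xi\|^2$. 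Summing the bounds gives $\|x^B\|^2 \leq \|x\|^2 < \infty$, so the series converges absolutely in $A(G)$, yielding $\phi_B \in A(G) \subseteq C_0(G)$.

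For the second step, $\phi_A = \phi - \phi_B$ belongs to $C_0(G)$ and is right $H$-invariant (because $\pi$ restricted to the $A$-summand is trivial on $H$). Using that $H$ is non-compact and closed, fix $g \in G$ and choose $h_n \to \infty$ in $H$; if $gh_n$ had a cluster point $g_0 \in G$, then $h_{n_k} = g^{-1}(gh_{n_k}) \to g^{-1}g_0 \in H$ by closedness of $H$, contradicting $h_n \to \infty$. Hence $gh_n \to \infty$ in $G$ and $\phi_A(g) = \lim_n \phi_A(gh_n) = 0$. Thus $\phi_A \equiv 0$, so $\phi = \phi_B \in A(G)$, and linear extension gives $B_0(G) \subseteq A(G)$.

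The main obstacle is the countability of $B$, via the identification of irreducible subrepresentations of $\lambda$ with atoms of the Plancherel measure; this is where type~I and second countability do substantive work beyond guaranteeing the initial direct integral decomposition. Once that is in hand, the dichotomy provided by condition~\ref{cond:irrep-single} splits the argument cleanly into a convergent series in $A(G)$ for the "regular" part and a short bi-invariance-plus-vanishing-at-infinity argument for the "trivial on $H$" part.
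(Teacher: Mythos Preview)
Your argument is correct and follows essentially the same route as the paper's proof of the more general Theorem~\ref{thm:irrep-countable} (which the paper proves in lieu of Theorem~\ref{thm:irrep-single}, the latter being quoted from \cite{K-fourier}): disintegrate a cyclic representation over $\hat G$, split into the part trivial on $H$ and the part consisting of irreducible subrepresentations of $\lambda$, use countability of the discrete series (Plancherel atoms) to land the second piece in $A(G)$, and kill the first piece via $H$-invariance plus $C_0$. The only organisational difference is that the paper first proves the decomposition holds for \emph{every} representation and packages the endgame as Lemma~\ref{lem:split}, whereas you work directly with the cyclic GNS representation attached to a given $\phi$; for a single $H$ this comes to the same thing.
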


Conversely, to establish $A(G)\neq B_0(G)$ is often easy, since there are already several results dealing with this problem. Here we want to mention two instances of such results.

A group is called an AR-group if its regular representation is completely reducible, that is, if it is a direct sum of irreducible representations. Recall that a second countable group $G$ satisfying $A(G) = B_0(G)$ is an AR-group (\cite[Theorem~2.1]{MR552704}).

A group $G$ is an IN-group if it has a conjugation invariant neighborhood of the identity, that is, a neighborhood $U$ of the identity such that $gUg^{-1} = U$ for all $g\in G$. Taylor proved (see \cite[p.~190]{MR690194}) that an IN-group which is also an AR-group has to be compact (and compact groups are both IN-groups and AR-groups).

Several examples of AR-groups were studied in \cite{MR509261,MR548088}. The examples of non-compact groups with completely reducible regular representation given in \cite{MR548088} are totally disconnected and unimodular, whereas the examples in \cite{MR509261} are connected and non-unimodular. It is no coincidence that no one produced a connected, unimodular example. In fact, the following theorem of Baggett excludes the possibility. The theorem is more or less contained in \cite{MR735532} and was noted by Taylor in \cite{MR2459312}. Combining the remark after \cite[Proposition~1.2]{MR735532} with \cite[Theorem~2.3]{MR735532} one obtains the following.
\begin{thm}[Baggett]\label{thm:Baggett}
If a second countable locally compact group is connected, unimodular and has a completely reducible regular representation, then it is compact.
\end{thm}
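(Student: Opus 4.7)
The strategy is to translate complete reducibility of the regular representation into an atomicity statement about the Plancherel measure, and then leverage connectedness of $G$ to rule this out unless $G$ is compact.

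First, since $G$ is second countable and unimodular, Plancherel theory provides a measure $\mu$ on $\hat G$ with $\lambda \cong \int^{\oplus}_{\hat G} \pi \otimes \bar\pi \, d\mu(\pi)$, and an irreducible representation appears as a subrepresentation of $\lambda$ if and only if it is square-integrable. Thus $\lambda$ is completely reducible precisely when $\mu$ is purely atomic, concentrated on the set $S \subseteq \hat G$ of square-integrable irreducibles, in which case $\lambda \cong \bigoplus_{\pi \in S} d_\pi \pi$ with multiplicities equal to the formal degrees. The task therefore reduces to showing that, for a connected, non-compact, unimodular, second countable group, the Plancherel measure cannot be purely atomic.

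Second, to rule out atomicity I would first try to exploit the central character. For each $\pi \in S$ the central character $\chi_\pi$ restricts to a continuous character of the closed connected abelian subgroup $Z(G)^0$, and the assignment $\pi \mapsto \chi_\pi|_{Z(G)^0}$ pushes $\mu$ forward to a countably-supported measure on $\widehat{Z(G)^0}$. On the other hand, disintegrating $\lambda$ with respect to $Z(G)^0$ via abelian Plancherel on $\widehat{Z(G)^0}$ yields a direct integral whose spectral measure is equivalent to Haar measure on the dual. If $Z(G)^0$ is non-compact, these two descriptions are incompatible, producing the desired contradiction; this should cover all connected groups with non-compact center component. I expect this is the content of the remark after Proposition~1.2 of \cite{MR735532}.

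The main obstacle, and where I expect the real work to lie, is the case in which $Z(G)^0$ is compact or trivial, as typified by connected semisimple non-compact groups such as $\SL(2,\R)$. Here the central character carries essentially no information, and the atomicity of $\mu$ must be excluded by deeper structural means: one would appeal to the fact that such a group always contributes non-trivial continuous spectrum to its Plancherel decomposition (for instance through principal and complementary series), so that $\mu$ necessarily has a continuous component. Packaging this as a general statement about connected locally compact groups, rather than just Lie groups, is precisely the role I expect Theorem~2.3 of \cite{MR735532} to play; combined with the central-character reduction above, it closes the argument and forces $G$ to be compact.
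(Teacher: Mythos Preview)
The paper does not supply its own proof of this theorem; it attributes the result to Baggett and simply records that it follows by combining the remark after \cite[Proposition~1.2]{MR735532} with \cite[Theorem~2.3]{MR735532}. Your proposal identifies exactly these two ingredients and sketches a plausible way in which they fit together, so at the level of references your approach coincides with the paper's.

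That said, your sketch is speculative about what those two results actually assert, and the division of labour you propose is not obviously the right one. In particular, your central-character argument for the case of non-compact $Z(G)^0$ is not quite complete as stated: pushing the Plancherel measure forward along $\pi\mapsto\chi_\pi|_{Z(G)^0}$ does land you on a countably supported measure, but equating this push-forward with the abelian Plancherel measure on $\widehat{Z(G)^0}$ requires more care than ``disintegrating $\lambda$ with respect to $Z(G)^0$'', since $\lambda_G|_{Z(G)^0}$ is an infinite multiple of $\lambda_{Z(G)^0}$ and the correspondence between the two disintegrations has to be made precise. Similarly, your handling of the compact-center case by appeal to ``continuous spectrum from principal series'' is heuristic; packaging this for general connected locally compact groups (not Lie groups) is exactly the non-trivial content you are outsourcing to \cite[Theorem~2.3]{MR735532}. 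So while your outline points at the right sources, it does not stand on its own as a proof, and the paper itself makes no stronger claim than the citation.
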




%

\section{Parabolic subgroups in higher rank simple Lie groups}\label{sec:rank-two}
In \cite{K-fourier}, it was shown that the minimal parabolic subgroups in real rank one simple Lie groups satisfy \eqref{eq:AB0}. This generalized Khalil's result on the $ax + b$ group.
In this section we show that the results from \cite{K-fourier} concerning parabolic subgroups in real rank one simple Lie groups do not generalize to higher rank simple Lie groups. The simple Lie group $\SL(3,\R)$ has real rank two, and its minimal parabolic subgroup $G$ consists of the upper triangular matrices in $\SL(3,\R)$ with positive diagonal entries,
\begin{align*}
G = \left\{\begin{pmatrix}
\lambda & a & c \\
0 & \mu & b \\
0 & 0 & \nu
\end{pmatrix}
\middle\vert\ a,b,c\in\R,\ \lambda,\mu,\nu>0,\ \lambda\mu\nu=1
\right\}.
\end{align*}
We will to prove that $G$ does not satisfy \eqref{eq:AB0}. To do so, it suffices to prove that $G$ has no discrete series, i.e., that the regular representation of $G$ has no irreducible subrepresentations, cf. \cite[Theorem~2.1]{MR552704}. The irreducible representations of $G$ can be determined using the Mackey Machine just as in \cite[Example~3]{MR0098328}. The discrete series of $G$ can be determined using \cite[Corollary~11.1]{MR0342641}.

The group $G$ is the semidirect product of the Heisenberg group $N$ and the diagonal subgroup $D$,
$$
N = \left\{\begin{pmatrix}
1 & a & c \\
0 & 1 & b \\
0 & 0 & 1
\end{pmatrix}
\middle\vert\ a,b,c\in\R
\right\},
\qquad
D = \left\{\begin{pmatrix}
\lambda & 0 & 0 \\
0 & \mu & 0 \\
0 & 0 & \nu
\end{pmatrix}
\middle\vert\ \lambda,\mu,\nu>0,\ \lambda\mu\nu=1
\right\}.
$$
To show that $G$ has no discrete series, it suffices to show that orbits in $\hat N$ under the dual action of $D$ with positive Plancherel measure have stabilizers without discrete series. The Plancherel measure on $\hat N$ is supported on the infinite dimensional representations (see e.g. \cite[p.~241]{MR1397028}). It can be shown that the dual action of $D$ on the infinite dimensional representations of $N$ has two orbits and that the stabilizer in $D$ of each orbit is
$$
\left\{\begin{pmatrix}
\lambda & 0 & 0 \\
0 & \lambda^{-2} & 0 \\
0 & 0 & \lambda
\end{pmatrix}
\middle\vert\ \lambda>0
\right\}.
$$
Since the stabilizer is isomorphic to $\R_+$, and $\R_+$ has no discrete series, it follows from \cite[Corollary~11.1]{MR0342641} that no irreducible representation of $G$ is in the discrete series. We have thus proved the following.
\begin{thm}\label{thm:parabolic}
Let $G$ be the minimal parabolic subgroup in $\SL(3,\R)$. The regular representation of $G$ has no irreducible subrepresentations. In particular, $A(G) \neq B_0(G)$.
\end{thm}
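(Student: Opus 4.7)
The strategy is to reduce the inequality $A(G) \neq B_0(G)$ to the absence of discrete series for $G$ and then verify the latter via the Mackey machine, as indicated in the paragraphs preceding the statement. By the Baggett--Taylor theorem quoted in Section~\ref{sec:prelim}, if $A(G) = B_0(G)$ then the regular representation of $G$ is completely reducible, and in particular contains at least one irreducible subrepresentation. So it suffices to prove that the regular representation of $G$ has no irreducible subrepresentation, i.e.\ that $G$ has empty discrete series.

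First I would identify $G$ as the semidirect product $G = N \rtimes D$, where $N$ is the $3$-dimensional Heisenberg group and $D \cong \R_+^2$ is the positive diagonal subgroup. Since $N$ is a simply connected nilpotent Lie group and $D$ acts on $N$ by automorphisms, the group $G$ is of type~I and the Mackey machine applies: every $\pi \in \hat G$ is obtained from a $D$-orbit in $\hat N$, a Mackey extension to the little group, and a unitary character of the stabilizer.

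Next I would invoke \cite[Corollary~11.1]{MR0342641}, which implies that such an induced representation can lie in the discrete series of $G$ only if the underlying $D$-orbit in $\hat N$ has positive Plancherel mass \emph{and} the associated stabilizer admits a discrete series. The Plancherel measure of the Heisenberg group is supported on the infinite-dimensional Schr\"odinger representations $\pi_t$ ($t\in\R\setminus\{0\}$), each parametrized up to equivalence by its central character $c \mapsto e^{2\pi i t c}$. A direct conjugation computation with $d = \mathrm{diag}(\lambda,\mu,\nu)$ shows that $d$ acts on the center $\{(0,0,c)\}$ of $N$ by the scalar $\lambda\nu^{-1}$, and hence $d\cdot \pi_t \cong \pi_{\lambda\nu^{-1} t}$ (up to sign convention). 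Consequently $D$ has exactly two orbits on this Plancherel-generic part of $\hat N$, distinguished by the sign of $t$, and the common stabilizer is
$$
\left\{\mathrm{diag}(\lambda,\lambda^{-2},\lambda) : \lambda > 0 \right\} \cong \R_+ \cong \R.
$$

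The only step that is not essentially bookkeeping is this orbit--stabilizer calculation: one must check that $\pi_t$ is determined up to equivalence by its central character, so that the $D$-action on the generic part of $\hat N$ collapses to the action on the one-dimensional center. Once this is in place the conclusion is immediate, since the additive group $\R$ has no discrete series (its regular representation is a direct integral of characters with no atomic part), and so by \cite[Corollary~11.1]{MR0342641} no irreducible representation of $G$ can be square-integrable. Combined with the Baggett--Taylor theorem, this yields $A(G) \neq B_0(G)$.
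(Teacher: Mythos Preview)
Your proposal is correct and follows essentially the same approach as the paper: reduce to the absence of discrete series via \cite[Theorem~2.1]{MR552704}, write $G = N\rtimes D$ with $N$ the Heisenberg group, invoke \cite[Corollary~11.1]{MR0342641}, and verify via the central-character action that the $D$-orbits of positive Plancherel mass in $\hat N$ have stabilizer $\{\mathrm{diag}(\lambda,\lambda^{-2},\lambda):\lambda>0\}\cong\R_+$, which has no discrete series. The only difference is that you spell out the Stone--von~Neumann step and the conjugation computation that the paper summarizes with ``it can be shown''.
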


\section{Uncountably many groups satisfying \texorpdfstring{\eqref{eq:AB0}}{(*)}}\label{sec:uncountable}
In this section we show that there exist uncountably many non-compact groups satisfying \eqref{eq:AB0}. Our family of examples consists of 4-dimensional, simply connected, solvable Lie groups.
For each $r\in\R$, define groups $H_r$ and $G_r$ by
$$
H_r = \left\{ \begin{pmatrix}
	a^r & b \\
	0 & a
\end{pmatrix} \middle\vert\
a > 0,\ b\in\R \right\},
$$

$$
G_r = \left\{\begin{pmatrix}
	a^r & b & x \\
	0 & a & y \\
	0 & 0 & 1
\end{pmatrix} \middle\vert\
a > 0,\ b,x,y\in\R \right\}.
$$
Note that $G_r\simeq\R^2\rtimes H_r$, where $H_r$ acts on $\R^2$ by matrix multiplication. The group $G_r$ contains the Heisenberg group $H$ and can also be viewed as a semidirect product $H\rtimes\R_+$ in an obvious way, where the group $\R_+$ then acts on $H$ by dilations. The groups $G_r$ were previously considered in \cite{MR1676245} (with a different parametrization), and it was shown in \cite[Section~4]{MR1676245} that when $0 \leq r \leq 2$, the groups $G_r$ are mutually non-isomorphic. We show here that the groups $G_r$ satisfy \eqref{eq:AB0} when $r\neq 0$.

\begin{thm}\label{thm:uncountable}
With $0 < r \leq 2$, the locally compact groups $G_r$ are mutually non-isomorphic and satisfy $A(G_r) = B_0(G_r)$. Moreover, the only compact subgroup of $G_r$ is the trivial group.
\end{thm}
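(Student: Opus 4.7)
The non-isomorphism claim for $0 < r \leq 2$ is inherited directly from \cite[Section~4]{MR1676245}. For the compact-subgroup assertion, my plan is to observe that a compact subgroup $K \leq G_r$ projects to a compact subgroup of $H_r$, which is forced to be trivial since $H_r$ is homeomorphic to $\R^2$. Hence $K$ sits inside the Heisenberg subgroup $N \subset G_r$; but $N$ is simply connected nilpotent and therefore homeomorphic to $\R^3$, forcing $K = \{e\}$.

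The substantive content is $A(G_r) = B_0(G_r)$, and the strategy is to verify the hypotheses of Theorem~\ref{thm:irrep-single}. The group $G_r$ is second countable, and as an exponential solvable Lie group it is of type~I, so it remains to exhibit a non-compact closed subgroup $H \leq G_r$ such that every irreducible unitary representation of $G_r$ is either trivial on $H$ or a subrepresentation of $\lambda_{G_r}$. I will take $H$ to be the center of the Heisenberg subgroup $N$, namely the one-parameter subgroup where only the $(1,3)$-entry varies (which is closed, non-compact, and even normal in $G_r$).

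To verify the condition I apply the Mackey machine to $G_r = \R^2 \rtimes H_r$. Identifying $\widehat{\R^2}$ with $\R^2$ via $\chi_{\alpha,\beta}(x,y) = e^{i(\alpha x + \beta y)}$, a direct computation gives the dual action $(a,b) \cdot (\alpha,\beta) = (a^{-r}\alpha,\, a^{-1}\beta - a^{-r-1}b\alpha)$. For $r > 0$ this has exactly five orbits: the two open half-planes $\{\alpha > 0\}$ and $\{\alpha < 0\}$, each with trivial stabilizer; the two half-lines $\{(0,\beta) : \pm \beta > 0\}$, each with stabilizer $\{(1,b) : b \in \R\} \cong \R$; and the origin, fixed by $H_r$. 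Accordingly, the irreducible representations of $G_r$ fall into three families. Representations from the origin factor through the quotient $G_r \to H_r$ and are thus trivial on $\R^2 \supset H$. Representations from the half-line orbits are induced from characters of the form $\chi_{0,\beta}$ on $\R^2$, all trivial on the $x$-axis, so their restriction to $\R^2$ is a direct integral supported in $\{\alpha = 0\}$ and in particular $H$ acts trivially. Finally, representations from the two open orbits are of the form $\Ind_{\R^2}^{G_r} \chi_{\pm 1,0}$, and since these orbits carry positive Plancherel measure in $\widehat{\R^2}$ with trivial stabilizer, they appear as honest subrepresentations of $\lambda_{G_r}$ by the Kleppner--Lipsman criterion \cite[Corollary~11.1]{MR0342641}.

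The main obstacle I anticipate is the last point: promoting the direct-integral occurrence of the open-orbit representations in the Plancherel decomposition of $G_r$ into a genuine subrepresentation statement. This is precisely the situation covered by \cite[Corollary~11.1]{MR0342641}, whose hypotheses are met here because the stabilizer of an open-orbit point is trivial (so its trivial representation carries an atom of its Plancherel measure) and the orbit itself is open, hence of positive Plancherel measure in $\widehat{\R^2}$. Once this is carried out, Theorem~\ref{thm:irrep-single} applied to the subgroup $H$ delivers $A(G_r) = B_0(G_r)$.
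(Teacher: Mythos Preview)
Your proposal is correct and follows essentially the same route as the paper: both arguments run the Mackey machine on $G_r=\R^2\rtimes H_r$, find the same five orbits, take the same subgroup $H=N_1$ (the $(1,3)$-entry one-parameter group), and invoke Theorem~\ref{thm:irrep-single}. The only differences are cosmetic: the paper deduces type~I from Mackey's theorem on stabilizers rather than from exponential solvability, cites \cite{MR509261} rather than \cite{MR0342641} for the open-orbit representations being in the discrete series, and handles triviality on $N_1$ for the half-line orbits via normality of $\R^2\rtimes H_\nu$ and \cite[Lemma~11]{K-fourier} rather than your restriction-to-$\R^2$ argument.
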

\begin{proof}
It is easy to see that $G_r$ has no compact subgroups. Indeed, since $H$ and $\R$ have no compact subgroups, neither does $G_r = H\rtimes\R$. In order to show $A(G_r) = B_0(G_r)$ we will apply Theorem~\ref{thm:irrep-single}.

The dual of $G$ can be determined using the Mackey Machine (see e.g. \cite[Theorem~6.42]{MR1397028} and \cite[Chapter~4]{MR3012851}). The dual action $H_r\acts\hat\R^2$ is easily identified as
$$
\begin{pmatrix}
	x\\y
\end{pmatrix}
\mapsto
\begin{pmatrix}
	a^{-r} & 0 \\
	-b a^{-r-1} & a^{-1}
\end{pmatrix}
\begin{pmatrix}
	x\\y
\end{pmatrix},
$$
and the orbits under this action are
\begin{alignat*}{2}
&\mc O_1 &&= \{ (x,y) \mid x > 0 \},
\displaybreak[0] \\
&\mc O_2 &&= \{ (x,y) \mid x < 0 \},
\displaybreak[0] \\
&\mc O_3 &&= \{ (0,y) \mid y > 0 \},
\displaybreak[0] \\
&\mc O_4 &&= \{ (0,y) \mid y < 0 \},
\displaybreak[0] \\
&\mc O_5 &&= \{ (0,0) \}.
\end{alignat*}
Since there are only five orbits, the action $H_r\acts\hat\R^2$ is certainly regular, and the Mackey Machine applies. The stabilizer in $H_r$ of points from orbits $\mc O_1$ and $\mc O_2$ is trivial, and the corresponding irreducible representations of $G_r$ (induced from an element of the orbit) are subrepresentations of the regular representation of $G_r$ (see e.g. \cite{MR509261}). We claim that the remaining irreducible representations of $G_r$ are trivial on the non-compact subgroup
$$
N_1 = \left\{\begin{pmatrix}
	1 & 0 & x \\
	0 & 1 & 0 \\
	0 & 0 & 1
\end{pmatrix} \middle\vert\
x\in\R \right\}.
$$
Note first that $N_1$ is normal in $G_r$. A character $\nu\in\hat\R^2$ from one of the orbits $\mc O_3$ or $\mc O_4$ is trivial on $N_1$. The stabilizer in $H_r$ of $\nu$ is the group
$$
H_\nu = \left\{ \begin{pmatrix}
	1 & b \\
	0 & 1
\end{pmatrix} \middle\vert\
b\in\R \right\},
$$
and the group $\R^2\rtimes H_\nu$ is also normal in $G_r$. An irreducible representation of $G_r$ arising from the character $\nu$ is an induced representation of the form
$$
\pi = \Ind_{\R^2\rtimes H_\nu}^{G_r}(\nu^*\otimes\sigma^*),
$$
where $\nu^*$ is the extension of $\nu$ to $\R^2\rtimes H_\nu$ obtained so $\nu^*$ is trivial on $H_\nu$, and $\sigma^*$ is the extension of some $\sigma\in\hat{ H_\nu}$ to $\R^2\rtimes H_\nu$ obtained so $\sigma^*$ is trivial on $\R^2$. Since $\nu^*\otimes\sigma^*$ is trivial on $N_1$, so is $\pi$ (see e.g. \cite[Lemma~11]{K-fourier}).

Irreducible representations of $G_r$ arising from the trivial character in $\mc O_5$ are precisely those that factorize to representations of $H_r$, and these are clearly trivial on $N_1$.

Note that all stabilizer subgroups of the action $H_r\acts\R^2$ are of type I, so that \cite[Theorem~9.3]{MR0098328} implies that $G_r$ is also of type I.

We have now verified the conditions of Theorem~\ref{thm:irrep-single} and may conclude $A(G_r) = B_0(G_r)$.
\end{proof}

\begin{rem}
It is also true that the groups $H_r$ satisfy $A(H_r) = B_0(H_r)$, except when $r = 1$. However, the groups $H_r$ are all isomorphic, when $r\neq 1$. Indeed, $H_r$ is then a non-abelian, simply connected Lie group of dimension 2, and it is well-known that there is only one such group, because there is only one non-abelian Lie algebra of dimension 2. The group $H_r$ is isomorphic to the $ax +b$ group studied by Khalil.
\end{rem}

\begin{rem}
In \cite[Theorem~2]{K-fourier}, the group $G_{r}$ with $r = -1$ was also shown to satisfy $A(G_r) = B_0(G_r)$. This group is isomorphic to $G_{1/2}$ (see \cite[Section~4]{MR1676245}).
\end{rem}

\section{Structural results and nilpotent groups}\label{sec:structure}
This section is devoted to the study of how the condition \eqref{eq:AB0} behaves under various group constructions (passing to subgroups, taking quotients, etc.). As an application, we prove that \eqref{eq:AB0} never holds for non-compact nilpotent groups (see Theorems~\ref{thm:nilpotent}).

The condition \eqref{eq:AB0} does not behave well under most group constructions, as can be seen from the known examples. For instance, the $ax+b$ group $\R\rtimes\R_+$ satisfies \eqref{eq:AB0}, whereas both the closed normal subgroup $\R$ and the quotient $\R_+$ do not satisfy \eqref{eq:AB0}. Concerning quotient groups, we however have the following easy but useful lemma.

\begin{lem}\label{lem:mod-compact}
Let $G$ be a locally compact group with a compact normal subgroup $K$. If $A(G) = B_0(G)$ then also $A(G/K) = B_0(G/K)$.
\end{lem}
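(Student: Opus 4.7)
The plan is to exploit the standard isometric pullback $q^{\ast}\colon B(G/K)\to B(G)$, $q^{\ast}(\phi)=\phi\circ q$, where $q\colon G\to G/K$ is the quotient map. Its image is the closed subspace of right $K$-invariant functions in $B(G)$, and because $K$ is compact the map $q$ is proper, so $q^{\ast}$ also sends $B_0(G/K)$ into $B_0(G)$ and $C_c(G/K)$ into $C_c(G)$.

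Given $\phi\in B_0(G/K)$, I would set $f=q^{\ast}(\phi)\in B_0(G)$. By hypothesis $f\in A(G)$, so the task reduces to showing $\phi\in A(G/K)$. To do this, pick a sequence $f_n\in B(G)\cap C_c(G)$ with $f_n\to f$ in the $B$-norm, and form the $K$-average
$$
\tilde f_n(g)=\int_K f_n(gk)\,dk,
$$
with $dk$ the normalized Haar measure on $K$. The key observation is that this averaging operator is a contraction on $B(G)$: realizing $f_n$ as a matrix coefficient $\langle\pi(\cdot)x,y\rangle$ of some representation $\pi$, the averaged function is the matrix coefficient $\langle\pi(\cdot)\bigl(\int_K\pi(k)x\,dk\bigr),y\rangle$, so $\|\tilde f_n\|_B\le\|f_n\|_B$. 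Since $K$ is compact, $\tilde f_n$ still has compact support (contained in $\mathrm{supp}(f_n)\cdot K$), and by construction it is right $K$-invariant. Hence $\tilde f_n=q^{\ast}(\phi_n)$ for some $\phi_n\in B(G/K)\cap C_c(G/K)\subseteq A(G/K)$.

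Since $f$ itself is $K$-invariant, $\tilde f=f$, and the contractivity gives $\|\tilde f_n-f\|_B\le\|f_n-f\|_B\to 0$. The isometry property of $q^{\ast}$ then yields $\|\phi_n-\phi\|_{B(G/K)}\to 0$, and since $A(G/K)$ is closed in $B(G/K)$ one concludes $\phi\in A(G/K)$, as desired.

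The main technical point — and really the only step that is not pure bookkeeping — is the contractivity and compact-support preservation of the $K$-averaging operator $f_n\mapsto\tilde f_n$; both are short, the norm bound by absorbing the $K$-integral into one of the vectors of a matrix coefficient, and the support statement by compactness of $K$. Everything else is assembled from these pieces via the isometric identification $q^{\ast}$.
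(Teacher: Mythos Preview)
Your proof is correct and follows essentially the same strategy as the paper: pull $\phi$ back along the quotient map to land in $B_0(G)=A(G)$, then use that $K$-invariant elements of $A(G)$ descend to $A(G/K)$. The paper simply invokes Eymard's results (2.26) and (3.25) for this last identification, whereas you prove it by hand via the $K$-averaging projection on $B(G)$; your averaging argument is precisely what underlies Eymard's statement, so the two proofs coincide in substance.
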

\begin{proof}
Suppose $A(G) = B_0(G)$ and let $\phi\in B_0(G/K)$ be given. Composing with the quotient map $\pi\colon G\to G/K$, we obtain the function $\phi\circ\pi$ in $B_0(G) = A(G)$. Since $\phi\circ\pi$ is obviously constant on $K$-cosets, it follows from \cite[(2.26) and (3.25)]{MR0228628} that $\phi\in A(G/K)$. This finishes the proof.
\end{proof}

We suspect that the converse of Lemma~\ref{lem:mod-compact} is also true.



We now turn to subgroups. Let $G$ be locally compact group with a closed subgroup $H$. Following \cite{MR3473390}, we say that $H$ is $B_0$-extending in $G$ provided that every function in $B_0(H)$ has an extension to a function in $B_0(G)$. In other words, $H$ is $B_0$-extending if the restriction map $B_0(G) \to B_0(H)$ is surjective. For example, it is obvious that open subgroups are $B_0$-extending. It was proved in \cite[Theorem~4.3]{MR3071703} that if $G$ is a SIN-group (if the identity admits a basis of conjugation invariant neighborhoods) then any closed subgroup is $B_0$-extending.

\begin{lem}
Let $G$ be a locally compact group such that $A(G) = B_0(G)$, and let $H$ be a closed subgroup of $G$. Then $A(H) = B_0(H)$ if and only if $H$ is $B_0$-extending .
\end{lem}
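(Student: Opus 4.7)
The plan is to prove each direction by a short direct argument, using only two standard facts about the Fourier algebra: (i) the restriction map $A(G)\to A(H)$ is well-defined and contractive (so in particular maps into $A(H)$), and (ii) Herz's restriction theorem, which states that for any closed subgroup $H$ of a locally compact group $G$ the restriction map $A(G)\to A(H)$ is surjective.

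For the ``only if'' direction, suppose $A(H) = B_0(H)$ and let $\phi\in B_0(H)$ be given. Then $\phi\in A(H)$ by hypothesis, so by Herz's theorem there exists $\psi\in A(G)$ with $\psi|_H = \phi$. Using the standing assumption $A(G) = B_0(G)$, we have $\psi\in B_0(G)$, which is an extension of $\phi$ in $B_0(G)$. Thus $H$ is $B_0$-extending.

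For the ``if'' direction, suppose $H$ is $B_0$-extending, and let $\phi\in B_0(H)$. Choose an extension $\psi\in B_0(G)$ with $\psi|_H = \phi$. Since $A(G) = B_0(G)$, we have $\psi\in A(G)$, and then by (i) the restriction $\phi = \psi|_H$ lies in $A(H)$. Thus $B_0(H)\subseteq A(H)$; the reverse inclusion $A(H)\subseteq B_0(H)$ is automatic, so $A(H) = B_0(H)$.

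Neither direction presents a real obstacle; the proof is essentially a two-line diagram chase, and the main content is bookkeeping to invoke Herz's extension theorem in the right direction and the standing hypothesis $A(G)=B_0(G)$ at the right moment.
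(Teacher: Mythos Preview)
Your proof is correct and follows essentially the same approach as the paper: both rely on the fact that restriction maps $A(G)$ into $A(H)$ and on Herz's restriction theorem for the surjectivity of $A(G)\to A(H)$, after which the equivalence is, as the paper puts it, ``trivial to show.'' You have simply spelled out the diagram chase that the paper leaves to the reader.
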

\begin{proof}
Every element of $A(G)$ restricts to an element of $A(H)$, that is, $\phi\in A(G)$ implies ${\phi|_H\in A(H)}$. Moreover, Herz' restriction theorem \cite[Theorem~1b]{MR0355482} says that every element of $A(H)$ is of the form $\phi|_H$ for some $\phi\in A(H)$. It is now trivial to show that $A(H) = B_0(H)$ if and only if $H$ is $B_0$-extending.
\end{proof}

The following was shown by Ghandehari in \cite[Theorem~4.4]{MR3071703} and \cite[Theorem~3.3.5]{Ghandehari-thesis}. A proof of (2) can also be found in \cite[p.~99]{MR0420148} with the additional remark that the proof given there works equally well without changes for functions vanishing at infinity.

\begin{thm}[Ghandehari]
Let $G$ be a locally compact group. The following subgroups are $B_0$-extending:
\begin{enumerate}
	\item[(1)] any open subgroup of $G$;
	\item[(2)] the identity component of $G$;
	\item[(3)] the center of $G$.
\end{enumerate}
\end{thm}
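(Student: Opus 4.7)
\emph{Open subgroups.} I would treat the three cases separately. An open subgroup $H$ of a topological group is automatically closed, so $\mathbf{1}_H$ is continuous. Given $\phi\in B_0(H)$, its extension by zero $\tilde\phi$ to $G$ is continuous (since $H$ is clopen) and lies in $C_0(G)$; Eymard's classical fact that zero-extension from an open subgroup is an isometric embedding $B(H)\hookrightarrow B(G)$ then yields $\tilde\phi\in B_0(G)$.

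\emph{Identity component.} Since $G/G_0$ is totally disconnected and locally compact, van Dantzig's theorem provides a compact open subgroup $K/G_0$; its preimage $W\subseteq G$ is an open subgroup with $G_0\subseteq W$ and $W/G_0$ compact. By the open case, it suffices to show $G_0$ is $B_0$-extending in $W$. Writing $\phi\in B_0(G_0)$ as a matrix coefficient of a representation $\pi$ of $G_0$, I would form $\rho=\Ind_{G_0}^W\pi$ on the $L^2$-space over the compact quotient $W/G_0$ and take a matrix coefficient with respect to vectors supported in a small neighborhood of the identity coset. By Mackey's subgroup theorem, $\rho|_{G_0}$ is a direct integral of conjugates of $\pi$, so the chosen coefficient restricts to $\phi$ (up to normalisation) on $G_0$ and tends to $0$ as $h\to\infty$ in $G_0$ by dominated convergence applied to $h\mapsto\phi(ghg^{-1})$ over a compact set of representatives $g$. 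Compactness of $W/G_0$ then upgrades this decay to a $C_0(W)$ statement, since any net escaping to infinity in $W$ must already escape to infinity in $G_0$.

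\emph{Center.} Let $Z=Z(G)$, abelian. Pontryagin duality writes $\phi\in B_0(Z)$ as $\phi(z)=\int_{\hat Z}\chi(z)\,d\mu(\chi)$ for a Rajchman measure $\mu$ on $\hat Z$. For each $\chi\in\hat Z$ the induced representation $\pi_\chi=\Ind_Z^G\chi$ acts on $L^2(G/Z)$, and centrality of $Z$ gives $\pi_\chi(z)=\chi(z)\,I$ for $z\in Z$. Fixing a compactly supported unit vector $\xi\in L^2(G/Z)$, I would define
\[
\tilde\phi(g) \;=\; \int_{\hat Z}\la\pi_\chi(g)\xi,\xi\ra\,d\mu(\chi),
\]
identify $\tilde\phi$ as a matrix coefficient of the direct integral $\int^{\oplus}\pi_\chi\,d|\mu|(\chi)$ (hence $\tilde\phi\in B(G)$), and check $\tilde\phi|_Z=\phi$ directly.

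\emph{Main obstacle.} The hard part will be verifying $\tilde\phi\in C_0(G)$ in the center case. Decay along $Z$ should come from the Rajchman property of $\mu$, while decay transverse to $Z$ should come from the compact support of $\xi$ on $G/Z$, which forces $\la\pi_\chi(g)\xi,\xi\ra$ to vanish once $gZ$ leaves a fixed compact subset of $G/Z$. Combining these two orthogonal decays uniformly in $\chi$, together with the measurability of $\chi\mapsto\pi_\chi$ needed to make the direct integral rigorous, is where I expect the main technical work to lie; a Bruhat section $G/Z\to G$ would be a natural auxiliary tool.
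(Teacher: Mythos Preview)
The paper does not give its own proof of this theorem; it is quoted from Ghandehari's work, with an additional pointer to Liukkonen--Mislove for part~(2). So there is no argument in the paper to compare against, and your task is really to supply an independent proof.

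Your treatment of (1) is correct and standard, and your outline for (3) is on the right track. The key point is that, because $Z$ is \emph{central}, the induced representation $\pi_\chi=\Ind_Z^G\chi$ restricted to $z\in Z$ acts as the scalar $\chi(z)$ on all of $L^2(G/Z)$, so the coefficient you build genuinely restricts to $\phi$ on $Z$. The $C_0$ verification you sketch (Fubini, compactness of $\operatorname{supp}\xi$ for the transverse decay, the Rajchman property of $\mu$ for the decay along $Z$) can be made rigorous along the lines you indicate.

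Part~(2), however, has a real gap. When you restrict $\rho=\Ind_{G_0}^W\pi$ back to $G_0$ and compute the coefficient with a vector $f$ supported near the identity coset, you obtain, for $h\in G_0$,
\[
\langle\rho(h)f,f\rangle \;=\; \int_{W/G_0}\big\langle \pi(w^{-1}hw)\,f(w),\,f(w)\big\rangle\,d\bar w,
\]
which is a weighted average of conjugates $\phi(w^{-1}hw)$ --- not $\phi(h)$ itself. Your sentence ``restricts to $\phi$ (up to normalisation)'' and the next sentence about dominated convergence over a compact set of conjugators are in tension; the second is closer to what actually happens. This averaging problem disappears in~(3) precisely because $Z$ is central, but $G_0$ is only normal. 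If $W/G_0$ were discrete you could take $f$ to be the indicator of the identity coset and recover $\phi$ exactly, but $W/G_0$ is in general a non-discrete profinite group, so no $L^2$ vector isolates the identity coset. Thus the function you produce lies in $B_0(W)$ but does not restrict to the given $\phi$, and a different mechanism is needed to extend $\phi$ from $G_0$; the paper's remark that the Liukkonen--Mislove argument ``works equally well without changes for functions vanishing at infinity'' suggests consulting that source for the missing step.
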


\begin{cor}
Let $G$ be a locally compact group with an open subgroup $H$.

If $A(G) = B_0(G)$, then $A(H) = B_0(H)$.

If $H$ is moreover cocompact (i.e. has finite index in $G$), then $A(H) = B_0(H)$ implies $A(G) = B_0(G)$.
\end{cor}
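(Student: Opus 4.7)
The plan is to handle the two implications separately; both follow from material already set up in the excerpt.

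For the forward direction, the preceding lemma asserts that, under the standing assumption $A(G) = B_0(G)$, the equality $A(H) = B_0(H)$ for a closed subgroup $H$ is equivalent to $H$ being $B_0$-extending in $G$. Since part (1) of Ghandehari's theorem guarantees that open subgroups are $B_0$-extending, the conclusion follows at once, with nothing further to prove.

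For the converse, I would invoke the classical fact (going back to Eymard) that for an open subgroup $H \leq G$, extension by zero defines an isometric embedding $A(H) \hookrightarrow A(G)$. Given $\phi \in B_0(G)$, the assumption $[G:H] < \infty$ lets us write $G = \bigsqcup_{i=1}^{n} g_i H$ as a finite disjoint union of clopen cosets. For each $i$, the left translate of $\phi$ by $g_i$ still lies in $B_0(G)$, and its restriction to the closed subgroup $H$ lies in $B_0(H) = A(H)$; here one uses that restriction $B(G) \to B(H)$ is a contractive homomorphism and that restrictions of $C_0$-functions to closed subsets remain $C_0$. Extension by zero then produces an element of $A(G)$ supported on $H$, and translating back yields a function in $A(G)$ supported on the coset $g_i H$ and agreeing with $\phi$ there. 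Summing the $n$ resulting functions gives $\phi \in A(G)$, establishing the missing inclusion $B_0(G) \subseteq A(G)$.

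The argument carries essentially no serious obstacle beyond invoking the extension-by-zero embedding $A(H) \hookrightarrow A(G)$ for open $H$, which is a standard result. The finiteness of the index is used precisely in the concluding finite summation: without it, the analogous reassembly would produce an infinite series with no a~priori convergence in the Fourier algebra, so the cocompactness hypothesis is genuinely needed for the converse.
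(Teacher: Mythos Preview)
Your proof is correct and follows the same route as the paper's: the forward direction is deduced from the preceding lemma together with Ghandehari's theorem on open subgroups, and the converse is obtained by translating $\phi$ to $H$ along each of the finitely many coset representatives, using $B_0(H)=A(H)$, extending by zero back into $A(G)$, translating back, and summing. The paper's argument and yours are essentially identical.
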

\begin{proof}
The first half is a direct consequence of the previous lemma and theorem.

Suppose conversely that $H$ has finite index and $A(H) = B_0(H)$. For a function $\psi\in A(H)$ define $\psi^0$ as the function on $G$ which coincides with $\psi$ on $H$ and is zero on the complement of $H$ in $G$. Then $\psi^0\in A(G)$ (see e.g. \cite[(3.21)]{MR0228628}).

Choose representatives $s_1,\ldots,s_n\in G$ for the left cosets $G/H$. For a function $f$ defined on $G$ and an element $x\in G$, let $L_xf$ denote the function $L_xf(y) = f(x^{-1}y)$. Let $\phi\in B_0(G)$ be given. Then $L_{s_i^{-1}}\phi\in B_0(G)$ and $(L_{s_i^{-1}}\phi)|_H\in B_0(H) = A(H)$. Hence, if we set
$$
\phi_i = L_{s_i} (((L_{s_i^{-1}} \phi)|_H)^0)
$$
we have $\phi_i\in A(G)$. Finally, it is easy to check that $\phi = \sum_i \phi_i\in A(G)$. This shows $B_0(G)\subseteq A(G)$ and completes the proof.
\end{proof}

\begin{cor}
Let $G$ be a locally compact group such that $A(G) = B_0(G)$. If $G_0$ denotes its connected component of the identity then $A(G_0) = B_0(G_0)$. In particular, $G_0$ is either compact or non-unimodular.
\end{cor}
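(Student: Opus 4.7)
The plan is as follows. The first assertion $A(G_0) = B_0(G_0)$ is essentially immediate from the two results preceding it: $G_0$ is a closed subgroup of $G$, part~(2) of Ghandehari's theorem above asserts that $G_0$ is $B_0$-extending in $G$, and the lemma preceding that theorem converts $B_0$-extendability together with $A(G) = B_0(G)$ into $A(G_0) = B_0(G_0)$.

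For the ``in particular'' clause I would argue by contraposition: assume that $G_0$ is unimodular; the goal is to conclude that $G_0$ is compact. By construction $G_0$ is connected, and we have just established that $G_0$ satisfies \eqref{eq:AB0}. If $G_0$ is second countable, then the Baggett--Taylor theorem quoted in the introduction yields that the regular representation of $G_0$ decomposes as a direct sum of irreducible subrepresentations, and Baggett's theorem (Theorem~\ref{thm:Baggett}) then forces any connected, unimodular, second countable AR-group to be compact.

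To remove the second-countability hypothesis I would appeal to Kakutani--Kodaira: since $G_0$ is connected it is $\sigma$-compact, so it contains a compact normal subgroup $K$ with $G_0/K$ second countable. Lemma~\ref{lem:mod-compact} applied to $G_0$ then gives $A(G_0/K) = B_0(G_0/K)$, the quotient $G_0/K$ is connected and unimodular (the modular function descends trivially through the compact kernel $K$), and the previous paragraph applies to show that $G_0/K$ is compact. Since $K$ is compact, $G_0$ itself is compact, completing the argument. The only mild obstacle I anticipate is the reduction to the second countable setting; beyond that, the proof is a direct chaining of the lemmas and theorems already proved or recalled earlier in the paper.
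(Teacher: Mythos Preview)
Your proposal is correct and follows essentially the same route as the paper: the first assertion is obtained exactly as you say from Ghandehari's theorem and the preceding lemma, and for the second assertion the paper likewise reduces to the second countable case via Kakutani--Kodaira and Lemma~\ref{lem:mod-compact}, then invokes Baggett--Taylor together with Baggett's theorem. The only cosmetic difference is that the paper phrases the conclusion as ``$G_0/K$ is either compact or non-unimodular, and hence the same is true for $G_0$'' rather than running the explicit contrapositive, but the logic is identical.
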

\begin{proof}
The first part is immediate from the lemma and theorem above. Since $G_0$ is connected, the second part is almost an immediate consequence of Baggett's theorem (Theorem~\ref{thm:Baggett}) together with \cite[Theorem~2.1]{MR552704}, except that we have not assumed $G$ to be second countable.

Since $G_0$ is connected, it is also $\sigma$-compact. By the Kakutani-Kodaira Theorem (see \cite[Theorem~8.7]{MR551496}) there is a compact normal subgroup $K\triangleleft G_0$ such that $G_0/K$ is second countable and of course still connected. By Lemma~\ref{lem:mod-compact} we also have $A(G_0/K) = B_0(G_0/K)$, so Baggett's theorem now implies that $G_0/K$ is either compact or non-unimodular, and hence the same is true for $G_0$ (see e.g. \cite[p.~91]{MR0175995}).
\end{proof}

\begin{cor}\label{cor:compact-center}
A locally compact group $G$ satisfying $A(G)=B_0(G)$ has compact center.
\end{cor}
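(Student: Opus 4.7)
The plan is to combine three ingredients already established in the excerpt: (a) Ghandehari's theorem that the center is always $B_0$-extending, (b) the lemma characterizing when $A(H) = B_0(H)$ holds for a closed subgroup $H$ of a group satisfying \eqref{eq:AB0}, and (c) the classical Hewitt--Zuckerman theorem that no non-compact abelian group satisfies \eqref{eq:AB0}.

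First I would observe that the center $Z(G)$ is a closed (abelian) subgroup of $G$. By part (3) of Ghandehari's theorem stated just above, $Z(G)$ is $B_0$-extending in $G$. Next, since we are assuming $A(G) = B_0(G)$, the lemma preceding Ghandehari's theorem applies with $H = Z(G)$ and gives
\[
A(Z(G)) = B_0(Z(G)).
\]
Finally, $Z(G)$ is an abelian locally compact group, so by the result of Hewitt and Zuckerman recalled in the introduction, the equality $A(Z(G)) = B_0(Z(G))$ forces $Z(G)$ to be compact.

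There is no serious obstacle here; the statement is essentially a direct corollary of the material just developed in this section together with the Hewitt--Zuckerman theorem. The only point worth being explicit about is that Hewitt--Zuckerman was stated in the introduction without a second countability hypothesis on the abelian group, so it may be used directly on $Z(G)$ without a reduction step (in contrast with the previous corollary, where a Kakutani--Kodaira reduction was needed to bring Baggett's theorem to bear). Thus the proof should be only two or three sentences, following the three-step chain $Z(G)$ closed $\Rightarrow$ $Z(G)$ is $B_0$-extending $\Rightarrow$ $A(Z(G)) = B_0(Z(G))$ $\Rightarrow$ $Z(G)$ compact.
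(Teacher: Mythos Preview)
Your proof is correct and follows exactly the same route as the paper: apply Ghandehari's theorem to see that the center is $B_0$-extending, use the preceding lemma to deduce $A(Z)=B_0(Z)$, and then invoke Hewitt--Zuckerman to conclude that the abelian group $Z$ is compact. Your remark that no second-countability reduction is needed here is also on point.
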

\begin{proof}
From the lemma and theorem above, one can deduce $A(Z) = B_0(Z)$, where $Z$ denotes the center of $G$. Since the center is abelian, it follows (e.g. from \cite[Theorem~5.6]{MR0193435}) that $Z$ is compact.
\end{proof}

One can give a different proof of Corollary~\ref{cor:compact-center} using ideas of Kaniuth, Lau, and Ülger from \cite[Example~2.6.(4)]{MR3473390} together with the fact that the Gelfand spectrum of $A(G)$ is simply $G$ (see \cite[Th\'eor\`eme~3.34]{MR0228628}).

As already mentioned and used, it was shown by Hewitt and Zuckerman in \cite[Theorem~5.6]{MR0193435} that abelian groups satisfying \eqref{eq:AB0} are compact. We show below how to extend their result to nilpotent groups. The proof relies on several of the previous results.

\begin{thm}\label{thm:nilpotent}
If $G$ is a nilpotent, locally compact group then $A(G)\neq B_0(G)$ unless $G$ is compact.
\end{thm}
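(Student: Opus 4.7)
The plan is to prove the theorem by induction on the nilpotency class $c$ of $G$, using the Corollary on compact center together with the Lemma on compact quotients as the engine driving the induction.

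For the base case $c = 1$, the group $G$ is abelian, and the result of Hewitt and Zuckerman \cite[Theorem~5.6]{MR0193435} (which is the starting point of this whole discussion) gives immediately that $A(G) = B_0(G)$ forces $G$ to be compact.

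For the inductive step, suppose the theorem holds for all nilpotent groups of class at most $c-1$, and let $G$ be nilpotent of class $c \geq 2$ satisfying $A(G) = B_0(G)$. Let $Z = Z(G)$ denote the center of $G$. By Corollary~\ref{cor:compact-center}, $Z$ is compact. Since $G$ is nilpotent of class $c$, the quotient $G/Z$ is nilpotent of class $c-1$. By Lemma~\ref{lem:mod-compact} applied to the compact normal subgroup $Z$, we have $A(G/Z) = B_0(G/Z)$. The induction hypothesis then yields that $G/Z$ is compact.

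It remains only to conclude that $G$ itself is compact, which follows from the standard fact that an extension of a compact group by a compact group is compact: the quotient map $G \to G/Z$ has compact fibers (each a translate of the compact group $Z$), so it is proper, and composing with the map $G/Z \to \{*\}$ (which is proper because $G/Z$ is compact) shows that $G \to \{*\}$ is proper, i.e., $G$ is compact. I do not foresee any significant obstacle here; all the nontrivial machinery (the compactness of the center, the descent to the quotient, and the Hewitt--Zuckerman base case) is already available from earlier in the paper, and the induction is straightforward once the nilpotency class is used to peel off the center.
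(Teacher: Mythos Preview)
Your proof is correct and follows essentially the same route as the paper's: induction on the nilpotency class, using Corollary~\ref{cor:compact-center} to get compactness of the center and Lemma~\ref{lem:mod-compact} to pass to the quotient. The only cosmetic difference is that the paper starts the induction at the trivial group (nilpotency length zero) rather than at the abelian case, so that the abelian step is absorbed into the first pass of the induction via Corollary~\ref{cor:compact-center}; your explicit appeal to Hewitt--Zuckerman for the base case is equally valid and indeed is what underlies that corollary anyway.
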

\begin{proof}
Let $G$ be a nilpotent, locally compact group and suppose $A(G) = B_0(G)$. We show that $G$ is compact. We use induction on the nilpotency length, that is, the length of an upper central series. If the nilpotency length is zero, then $G$ is the trivial group, and there is nothing to prove.

Suppose the nilpotency length $n$ is a least one, and let $Z$ denote the center of $G$. By Theorem~\ref{cor:compact-center}, $Z$ is compact. It therefore follows from Lemma~\ref{lem:mod-compact} that $A(G/Z) = B_0(G/Z)$. The group $G/Z$ has nilpotency length $n-1$. Hence, by the induction hypothesis, $G/Z$ is compact. As $Z$ was also compact, we conclude that $G$ itself is compact.
\end{proof}

It is clear from Khalil's result \cite[p.~165]{MR0350330} on the $ax+b$ group and also from Theorem~\ref{thm:uncountable} that one cannot extend Theorem~\ref{thm:nilpotent} to solvable groups.

For the case of connected groups, let us point out that the above theorem is direct consequences of Baggett's theorem (Theorem~\ref{thm:Baggett}). Indeed, nilpotent groups are always unimodular. But in fact, connected nilpotent groups satisfying \eqref{eq:AB0} are even abelian. This can be seen from Theorem~\ref{thm:nilpotent} together with the fact that compact connected solvable groups are abelian (see e.g. \cite[Proposition~9.4]{MR3114697}). Hence the theorem is mostly interesting for totally disconnected, nilpotent groups. 

It was shown in \cite[Theorem~4.3]{MR3473390} that the Fourier and Rajchman algebras of certain nilpotent groups are not only distinct but even have distinct spectra.

We now provide an example of a group, where Theorem~\ref{thm:nilpotent} applies. Of course, we are mostly interested in an example, which is not already covered by previously known results. Since nilpotent groups are unimodular, we will give examples of groups which are not almost connected. Due to Taylor's result on IN-groups \cite[p.~190]{MR690194}, our examples will also not be IN-groups.


\begin{exam}
Consider the Heisenberg group over the $p$-adic field $\Q_p$,
$$
H = \left\{\begin{pmatrix}
	1 & x & z \\
	0 & 1 & y \\
	0 & 0 & 1
\end{pmatrix}\middle\vert\
x,y,z\in\Q_p
\right\}.
$$
The group is two-step nilpotent and totally disconnected.
That the group is not an IN-group can been seen as follows. Let $U$ be a compact neighborhood of the identity in $H$. We use the notation
$$
H(x,y,z) =
\begin{pmatrix}
	1 & x & z \\
	0 & 1 & y \\
	0 & 0 & 1
\end{pmatrix}.
$$
As $\Q_p$ is non-discrete, there is $y\neq 0$ such that $H(0,y,0)\in U$. By compactness, there is $C\geq 0$ such that $H(x,y,z)\in U \implies |z|_p\leq C$, where $|\ |_p$ denotes the $p$-adic norm. Now,
$$
H(x,0,0)H(0,y,0)H(x,0,0)^{-1} = H(0,y,xy) \notin U
$$
if $x$ is chosen such that and $|x|_p > C/|y|_p$. Thus, $U$ is not invariant.

One could argue that $H$ has non-compact center, and therefore already Theorem~\ref{cor:compact-center} shows that \eqref{eq:AB0} does not hold for $H$. An example, which in addition has compact center, can be constructed as follows.

Consider the diagonal embedding $\Z[\tfrac{1}{p}]\hookrightarrow \R\times\Q_p$. This is a discrete embedding, and the quotient $\mathbb S_p = (\R\times\Q_p)/\Z[\tfrac{1}{p}]$ is the compact $p$-adic solenoid (see \cite[p.~58]{MR1760253}).
Let $G$ be the quotient of the group $T$ consisting of upper-triangular $4\times4$-matrices with entries in $\R\times\Q_p$ and $1$ on the diagonal by the central subgroup $T_0$ defined as
$$
T_0 = \left\{\begin{pmatrix}
	1 & 0 & 0 & z \\
	0 & 1 & 0 & 0 \\
	0 & 0 & 1 & 0 \\
	0 & 0 & 0 & 1
\end{pmatrix}\middle\vert\
z\in\Z[\tfrac{1}{p}]
\right\}.
$$

$$
G = T/T_0 = \left\{\begin{pmatrix}
	1 & * & * & z \\
	0 & 1 & * & * \\
	0 & 0 & 1 & * \\
	0 & 0 & 0 & 1
\end{pmatrix}\middle\vert\
*\in \R\times\Q_p,\ z\in \mathbb S_p
\right\}.
$$

Then one may check that $G$ is three-step nilpotent with compact center
$$
Z(G) = \left\{\begin{pmatrix}
	1 & 0 & 0 & z \\
	0 & 1 & 0 & 0 \\
	0 & 0 & 1 & 0 \\
	0 & 0 & 0 & 1
\end{pmatrix}\middle\vert\
z\in \mathbb S_p
\right\}.
$$
The computation
$$
\begin{pmatrix}
	1 & 0 & 0 & 0 \\
	 & 1 & x & 0 \\
	 &  & 1 & 0 \\
	 &  &  & 1
\end{pmatrix}
\begin{pmatrix}
	1 & 0 & 0 & 0 \\
	 & 1 & 0 & 0 \\
	 &  & 1 & y \\
	 &  &  & 1
\end{pmatrix}
\begin{pmatrix}
	1 & 0 & 0 & 0 \\
	 & 1 & x & 0 \\
	 &  & 1 & 0 \\
	 &  &  & 1
\end{pmatrix}^{-1}
=
\begin{pmatrix}
	1 & 0 & 0 & 0 \\
	 & 1 & 0 & xy \\
	 &  & 1 & y \\
	 &  &  & 1
\end{pmatrix}
$$
and the same argument as for the two-step nilpotent group above shows that $G$ is not an IN-group. The group $G$ is of course far from being connected, since $\Q_p$ is totally disconnected. By Theorem~\ref{thm:nilpotent}, one has $A(G) \neq B_0(G)$.

The reason, why our first two-step nilpotent example was not a genuine example where Theorem~\ref{thm:nilpotent} was applicable (because the group had non-compact center), is explained by the following proposition.
\begin{prop}
Any two-step nilpotent group is which is not an IN-group has non-compact center.
\end{prop}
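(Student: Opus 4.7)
The plan is to prove the contrapositive: if $G$ is a two-step nilpotent locally compact group with compact center $Z = Z(G)$, then $G$ is an IN-group. The key algebraic input is that two-step nilpotency means $[G,G]\subseteq Z$, so for any $g,h\in G$ the commutator $[g,h]=ghg^{-1}h^{-1}$ lies in $Z$. Rearranging, $ghg^{-1}=[g,h]\,h$, so conjugating $h$ by any element of $G$ only multiplies it by a central element.

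Given this, I would construct an explicit invariant neighborhood. Let $V$ be any compact neighborhood of the identity (which exists since $G$ is locally compact), and set
$$
U = Z\cdot V = \{zv\mid z\in Z,\ v\in V\}.
$$
Because $Z$ is compact and multiplication is continuous, $U$ is the image of $Z\times V$ under a continuous map and is therefore compact. It contains $V$ (taking $z=e$), so it is a neighborhood of the identity.

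The invariance check is the direct calculation enabled by the first paragraph: for $g\in G$ and $u=zv\in U$ with $z\in Z$, $v\in V$,
$$
gug^{-1} = z\,(gvg^{-1}) = z\,[g,v]\,v \in Z\cdot V = U,
$$
since $z[g,v]\in Z$. Thus $gUg^{-1}\subseteq U$ for every $g\in G$, and applying the same argument with $g^{-1}$ yields $U\subseteq gUg^{-1}$, so $gUg^{-1}=U$.

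I do not foresee a real obstacle here; the only mild subtlety is making sure $U$ qualifies as a (compact) neighborhood, which follows from compactness of $Z$ and the fact that $U\supseteq V$. Everything else is the one-line commutator identity available in any two-step nilpotent group. This also matches the heuristic explanation of the earlier Heisenberg-over-$\Q_p$ example: the failure of the IN property there was detected precisely by central elements being pushed far away via conjugation, which cannot happen once $Z(G)$ is compact.
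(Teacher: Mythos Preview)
Your proof is correct. The paper takes a more abstract route: it observes that $G/Z$ is abelian, hence an IN-group, and then invokes the general fact that a compact extension of an IN-group is again an IN-group. Your argument is more direct and self-contained: rather than passing through the quotient and appealing to an extension lemma, you exploit the inclusion $[G,G]\subseteq Z$ to build the invariant compact neighborhood $U=ZV$ explicitly and verify invariance via the commutator identity $gvg^{-1}=[g,v]v\in ZV$. The paper's approach has the advantage of generality (any locally compact group that is a compact extension of an IN-group is IN), whereas yours is more elementary and does not rely on an auxiliary fact left to the reader.
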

\begin{proof}
Assume $G$ is a two-step nilpotent group with center $Z$. Then $G/Z$ is an abelian group and hence an IN-group. Since one can easily show that compact extensions of IN-groups are IN-groups, it follows that if $Z$ is compact, then $G$ is an IN-group.
\end{proof}
\end{exam}

At this point, we shift the focus to AR-groups for a while, that is, to groups whose regular representation is completely reducible. The following lemma is parallel to Lemma~\ref{lem:mod-compact}.

\begin{lem}
Let $G$ be a locally compact group with a compact normal subgroup $K$. If $G$ has completely reducible regular representation, then so does $G/K$.
\end{lem}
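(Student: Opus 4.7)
The plan is to realize the regular representation $\lambda_{G/K}$ as a subrepresentation of $\lambda_G$ and then to invoke the general fact that a closed invariant subspace of a completely reducible unitary representation is itself completely reducible. Concretely, let $q\colon G\to G/K$ denote the quotient map. Normalizing Haar measures on $G$, $K$, $G/K$ so that Weil's integral formula
\begin{equation*}
\int_G h(g)\, dg = \int_{G/K}\int_K h(gk)\, dk\, d(gK), \qquad h\in C_c(G),
\end{equation*}
holds, compactness of $K$ ensures that $f\mapsto f\circ q$ defines (up to a scalar) an isometry of $L^2(G/K)$ onto the closed subspace $L^2(G)^K\subseteq L^2(G)$ of right $K$-invariant functions; because $K$ is normal, right and left $K$-invariance coincide here. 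The subspace $L^2(G)^K$ is clearly $\lambda_G$-invariant, and the identity $(\lambda_G(g)(f\circ q))(y) = f(q(g)^{-1}q(y)) = (\lambda_{G/K}(q(g))f)(q(y))$ shows that $\lambda_{G/K}$ is unitarily equivalent to the restriction of $\lambda_G$ to $L^2(G)^K$.

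For the second step, I would use the hypothesis to write $\lambda_G\cong \bigoplus_\alpha \pi_\alpha\otimes I_{\mathcal{M}_\alpha}$, grouped by unitary equivalence classes of irreducible summands, with the $\pi_\alpha$ pairwise inequivalent and $\mathcal{M}_\alpha$ the multiplicity Hilbert spaces. Schur's lemma together with the fact that intertwiners between inequivalent irreducibles vanish gives
\begin{equation*}
\lambda_G(G)' = \bigoplus_\alpha \mathbb{C} I_{H_{\pi_\alpha}}\otimes B(\mathcal{M}_\alpha).
\end{equation*}
Any $\lambda_G$-invariant closed subspace $V$ is the range of a projection in this commutant, so has the form $V=\bigoplus_\alpha H_{\pi_\alpha}\otimes W_\alpha$ for closed subspaces $W_\alpha\subseteq\mathcal{M}_\alpha$. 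In particular, $V$ is a direct sum of irreducible subrepresentations, each equivalent to some $\pi_\alpha$. Applying this with $V=L^2(G)^K$ and transporting back along the isometry of the first step yields a completely reducible decomposition of $\lambda_{G/K}$.

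The step requiring the most care is the abstract claim about inheritance of complete reducibility by invariant subspaces; the commutant computation is standard but worth stating explicitly since it works without any type~I hypothesis, relying only on the given direct sum decomposition into irreducibles. The identification $\lambda_{G/K}\cong \lambda_G|_{L^2(G)^K}$, by contrast, is essentially Weil's formula combined with compactness of $K$, and should be straightforward.
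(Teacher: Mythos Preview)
Your argument is correct, but it takes a genuinely different route from the paper's. The paper works on the von Neumann algebra side: it uses the characterization that $G$ is an AR-group if and only if $\VN(G)\cong\bigoplus_{i\in I}B(H_i)$, then invokes Eymard's result that $\VN(G/K)$ is a \emph{quotient} of $\VN(G)$ when $K$ is compact normal, and finally observes that a von Neumann quotient of $\bigoplus_i B(H_i)$ is again of this form because each $B(H_i)$ is a factor. Your approach is the representation-theoretic dual: you realize $\lambda_{G/K}$ as a \emph{subrepresentation} of $\lambda_G$ via the embedding $L^2(G/K)\hookrightarrow L^2(G)$ afforded by compactness of $K$, and then show directly (through the commutant description) that any closed invariant subspace of a completely reducible representation is completely reducible. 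Your route is more self-contained, avoiding the citation to Eymard, at the cost of writing out the commutant computation; the paper's route is shorter once that machinery is in hand. Both exploit the same underlying structure, just read in opposite directions (quotient of the generated von Neumann algebra versus restriction to an invariant subspace).
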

\begin{proof}
First note that $G$ is an AR-group if and only if $\VN(G) = \bigoplus_{i\in I} B(H_i)$ for some Hilbert spaces $H_i$. Here, the direct sum means the von Neumann algebra direct sum which consists of all sequences $(T_i)$ with $T_i\in B(H_i)$ such that $\sup_i \|T_i\|< \infty$. 

According to \cite[(3.25)]{MR0228628}, the von Neumann algebra $\VN(G/K)$ is a quotient of $\VN(G)$. Since each $B(H_i)$ is a factor, it follows that $\VN(G/K) = \bigoplus_{i\in J} B(H_i)$ for some subset $J\subseteq I$, and consequently $G/K$ is an AR-group.
\end{proof}

As a first application of the lemma, let us mention that Baggett's theorem also holds without the second countability assumption.

\begin{thm}[Baggett]
If a locally compact group $G$ is connected, unimodular and has a completely reducible regular representation, then $G$ is compact.
\end{thm}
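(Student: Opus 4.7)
The plan is to reduce to the second countable case (Theorem~\ref{thm:Baggett} as already stated) by quotienting out a compact normal subgroup, following exactly the same strategy as in the proof of the corollary about $G_0$ earlier in this section.

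First, I would observe that since $G$ is connected, it is automatically $\sigma$-compact. The Kakutani--Kodaira theorem (\cite[Theorem~8.7]{MR551496}) then produces a compact normal subgroup $K\triangleleft G$ such that the quotient $G/K$ is second countable. Next I would verify the three hypotheses of the second countable Baggett theorem for $G/K$: connectedness is preserved under continuous surjections, unimodularity passes to the quotient by a compact normal subgroup (see e.g. \cite[p.~91]{MR0175995}), and complete reducibility of the regular representation transfers from $G$ to $G/K$ by the lemma just proved.

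Having these three properties in place, Theorem~\ref{thm:Baggett} implies $G/K$ is compact. Since $K$ is compact and $G/K$ is compact, $G$ itself is compact, as desired. There is no real obstacle here; the work is entirely in having set up the previous lemma on AR-groups modulo compact subgroups, which is what makes this sharpening essentially automatic.
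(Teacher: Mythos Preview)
Your proposal is correct and follows essentially the same argument as the paper: both reduce to the second countable case via Kakutani--Kodaira, invoke the lemma that the AR-property passes to quotients by compact normal subgroups, note that connectedness and unimodularity descend to $G/K$, and then conclude compactness of $G$ from compactness of $G/K$ and $K$.
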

\begin{proof}
Since the group $G$ is connected, it is $\sigma$-compact. By the Kakutani-Kodaira Theorem (see \cite[Theorem~8.7]{MR551496}) there is a compact normal subgroup $K\triangleleft G$ such that $G/K$ is second countable and of course still connected. By the lemma, the group $G/K$ also has a completely reducible regular representation. The group $G/K$ is also unimodular (see e.g. \cite[p.~91]{MR0175995}). Hence $G/K$ is compact by Baggett's theorem. It follows that $G$ itself is also compact.
\end{proof}

In view of \cite{MR552704}, the following also gives a different proof of (and improves) Corollary~\ref{cor:compact-center} -- at least for second countable groups.

\begin{thm}
A locally compact group AR-group has compact center.
\end{thm}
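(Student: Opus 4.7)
My plan is to argue the contrapositive: if the center $Z := Z(G)$ is non-compact, then $G$ is not an AR-group. The heart of the argument is that centrality of $Z$ turns any eigenvector of $\lambda_G|_Z$ in $L^2(G)$ into a $Z$-invariant $L^2$-function on $G$, which cannot exist when $Z$ has infinite Haar measure.

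Concretely, I would first show that when $Z$ is non-compact, $\lambda_G|_Z$ admits no nonzero eigenvector in $L^2(G)$. If $f\in L^2(G)$ satisfies $\lambda(z)f=\chi(z)f$ for every $z\in Z$ and some $\chi\in\hat Z$, then $|f(z^{-1}x)|=|f(x)|$ almost everywhere, so $|f|$ is left $Z$-invariant; by centrality this coincides with right $Z$-invariance, so $|f|^2$ descends to a non-negative measurable function $\tilde h$ on $Z\backslash G$. Central elements leave Haar measure invariant from both sides, so $\Delta_G|_Z\equiv 1$ and Weil's integral formula applies without modular corrections to give
\[
\|f\|_2^2 \;=\; \int_G |f|^2\,d\mu_G \;=\; \mu_Z(Z)\int_{Z\backslash G} \tilde h\,d\dot g.
\]
Since $\mu_Z(Z)=\infty$ while the left side is finite, $\tilde h=0$ almost everywhere, whence $f=0$.

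On the other hand, if $G$ were a non-trivial AR-group then $\lambda_G=\bigoplus_i \pi_i$ would decompose into irreducibles, and Schur's lemma applied to the central subgroup $Z$ would produce characters $\chi_i\in\hat Z$ with $\pi_i|_Z=\chi_i\otimes I_{H_{\pi_i}}$. Every nonzero vector in any $H_{\pi_i}$ would then be an eigenvector of $\lambda_G|_Z$, directly contradicting the previous paragraph. The trivial group has trivially compact center, so the proof is complete.

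I do not anticipate a serious obstacle: the argument uses only Schur's lemma and the Weil integral formula, and neatly sidesteps the direct-integral or measure-class machinery one might otherwise expect. Viewed against Corollary \ref{cor:compact-center}, whose proof restricted the hypothesis $A(G)=B_0(G)$ to $Z$ to obtain $A(Z)=B_0(Z)$ and then invoked Hewitt--Zuckerman, the present argument shows that the weaker AR hypothesis already forces compactness of $Z$ through a direct $L^2$-computation, and does so without any second countability assumption.
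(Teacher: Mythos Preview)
Your argument is correct and takes a genuinely different route from the paper's. The paper proceeds by showing that the center $Z$ is itself an AR-group: it uses the fact that $\lambda_Z$ is quasi-equivalent to $\lambda_G|_Z$ (via the isomorphism of $\VN(Z)$ with the von Neumann subalgebra of $\VN(G)$ generated by $\{\lambda_G(z):z\in Z\}$), then applies Schur's lemma exactly as you do to see that $\lambda_G|_Z$ is a direct sum of characters, and finally invokes the known fact that abelian AR-groups are compact. Your proof bypasses both the quasi-equivalence step and the appeal to the abelian case: you work directly in $L^2(G)$ and show via Weil's integral formula that a $\chi$-eigenvector of $\lambda_G|_Z$ must vanish whenever $Z$ has infinite Haar measure. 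In effect you have unpacked the statement ``an abelian AR-group is compact'' and run its proof inside $L^2(G)$ rather than inside $L^2(Z)$. This makes your argument more self-contained and elementary; the small price is that you must check the modular condition $\Delta_G|_Z=\Delta_Z$ (which you correctly derive from centrality) so that an invariant measure on $Z\backslash G$ exists and Weil's formula applies. Both proofs avoid second countability.
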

\begin{proof}
Let $G$ be an AR-group. We claim that its center $Z$ is also an AR-group. Let $\lambda_G$ and $\lambda_Z$ be the regular representations of $G$ and $Z$, respectively. It is well-known that the map $\lambda_Z(z) \mapsto \lambda_G(z)$ extends to a $^*$-isomorphism between $\VN(Z)$ and the von Neumann subalgebra of $\VN(G)$ generated by $\{\lambda_G(z) \mid z\in Z\}$ (see e.g. the proof of \cite[Theorem~6]{MR0281843} or use Herz' restriction theorem). In other words, the regular representation of $Z$ and the representation $\lambda_G|_Z$ to $Z$ of the regular representation of $G$ are quasi-equivalent. It therefore suffices to show that $\lambda_G|_Z$ is completely reducible.

Write the regular representation $\lambda_G = \bigoplus_{i\in I} \pi_i$ as a direct sum of irreducible representations $\pi_i$. Let $H_i$ denote the Hilbert space of $\pi_i$. From Schur's lemma we know that $\pi_i(Z)$ is contained in the scalar multiples of the identity $1_{H_i}$. In other words, there is a character $\chi_i\in\hat Z$ such that $\pi_i|_Z \simeq \chi_i \otimes 1_{H_i}$. We therefore have
$$
\lambda_G|_Z = \bigoplus_i \pi_i|_Z = \bigoplus_i \bigoplus_{j=1}^{\dim H_i} \chi_i
$$
which shows that $Z$ is an AR-group. Since the only abelian AR-groups are compact, the proof is complete.
\end{proof}

The following improves Theorem~\ref{thm:nilpotent}.

\begin{thm}\label{thm:nilpotent2}
A nilpotent, locally compact group with completely reducible regular representation is compact.
\end{thm}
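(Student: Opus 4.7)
The plan is to mimic verbatim the induction used in the proof of Theorem~\ref{thm:nilpotent}, simply replacing the property $A(G)=B_0(G)$ by the property of having a completely reducible regular representation, and replacing the two ingredients used there (Corollary~\ref{cor:compact-center} and Lemma~\ref{lem:mod-compact}) by their AR-group analogues established just above.

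More precisely, I would argue by induction on the nilpotency length $n$ of $G$. The base case $n=0$ is trivial since $G$ is then the trivial group. For the induction step, assume $G$ is nilpotent of length $n\geq 1$ with completely reducible regular representation, and let $Z$ denote its center. By the theorem that AR-groups have compact center (proved immediately above), $Z$ is compact. By the lemma stating that the quotient of an AR-group by a compact normal subgroup is again an AR-group, the quotient $G/Z$ also has completely reducible regular representation. Since $G/Z$ is nilpotent of length $n-1$, the induction hypothesis yields that $G/Z$ is compact. Combining this with the compactness of $Z$, we deduce that $G$ itself is compact, as an extension of a compact group by a compact group.

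There is no real obstacle here: the only subtlety is that both input results (compactness of the center for AR-groups, and preservation of the AR property under quotients by compact normal subgroups) have already been established in this section without a second-countability hypothesis, so the induction runs cleanly in the general locally compact setting and does not require invoking the Kakutani--Kodaira reduction a second time.
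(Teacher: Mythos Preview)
Your proposal is correct and matches the paper's own proof exactly: the paper simply says ``The proof is completely analogous to the proof of Theorem~\ref{thm:nilpotent},'' which is precisely the induction on nilpotency length you describe, using the AR-group versions of the compact-center result and the compact-quotient lemma established just before.
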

\begin{proof}
The proof is completely analogous to the proof of Theorem~\ref{thm:nilpotent}.
\end{proof}

\section{Direct products}\label{sec:products}

In this section we study how the property \eqref{eq:AB0} behaves under direct products. Recall that, for a second countable locally compact group $G$ of type~I, the following condition is sufficient to conclude $A(G) = B_0(G)$ (see Theorem~\ref{thm:irrep-single}).
\begin{enumerate}
	\item[\ref{cond:irrep-single}.]  There is a non-compact, closed subgroup $H$ of $G$ such that every irreducible unitary representation of $G$ is either trivial on $H$ or is a subrepresentation of the left regular representation of $G$.
\end{enumerate}

The main result here is twofold: Firstly, we show by an example that Condition~\ref{cond:irrep-single} is in general not necessary in order to conclude \eqref{eq:AB0} and not stable under products. Secondly, we introduce a weaker Condition~\ref{cond:irrep-countable} which is stable under products and still sufficient to conclude $A(G) = B_0(G)$.

We start out with an example showing that Condition~\ref{cond:irrep-single} is not stable under products (Example~\ref{exam:product1} below). First, let us recall the Kronecker product of representations. For $i=1,2$, let $G_i$ be a locally compact group and let $\pi_i$ be a unitary representation of $G_i$. The \emph{Kronecker product} (also called the external or outer tensor product) is the unitary representation $\pi_1\times\pi_2$ of $G_1\times G_2$ defined by
$$
(\pi_1\times\pi_2)(g_1,g_2) = \pi_1(g_1)\otimes\pi_2(g_2), \qquad g_1\in G_1,\ g_2\in G_2.
$$

For a locally compact group $G$, recall that $\lambda_G$ denotes the regular representation of $G$ on $L^2(G)$. The following is well-known.
\begin{lem}\label{lem:product}
Let $G_1$ and $G_2$ be locally compact groups, and consider their direct product $G_1\times G_2$.
\begin{enumerate}
	\item[(1)] The regular representation $\lambda_{G_1\times G_2}$ is unitarily equivalent to $\lambda_{G_1}\times\lambda_{G_2}$. 
	\item[(2)] If at least one of $G_1$ and $G_2$ are of type~I, then $(\pi_1,\pi_2)\mapsto\pi_1\times\pi_2$ defines a bijection $\hat{ G_1}\times\hat{ G_2}\to\hat{G_1\times G_2}$.
	\item[(3)] The group $G_1\times G_2$ is of type~I if and only if both $G_1$ and $G_2$ are of type~I.
\end{enumerate}
\end{lem}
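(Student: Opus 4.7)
The plan is to build the three parts in order, leaning on the identification $L^2(G_1\times G_2)\cong L^2(G_1)\otimes L^2(G_2)$ that comes from the product Haar measure together with Fubini's theorem on $L^2$.

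For (1), I would note that the product Haar measure on $G_1\times G_2$ is (a constant multiple of) the product of the Haar measures on $G_1$ and $G_2$. Under the canonical unitary $U\colon L^2(G_1)\otimes L^2(G_2)\to L^2(G_1\times G_2)$ sending $f_1\otimes f_2$ to the function $(g_1,g_2)\mapsto f_1(g_1)f_2(g_2)$, the translation action $\lambda_{G_1\times G_2}(g_1,g_2)$ corresponds exactly to $\lambda_{G_1}(g_1)\otimes\lambda_{G_2}(g_2)$, which is $(\lambda_{G_1}\times\lambda_{G_2})(g_1,g_2)$. This is a direct verification on elementary tensors.

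For (2), which is the main obstacle, I would argue as follows. First, the Kronecker product $\pi_1\times\pi_2$ of two irreducible representations is irreducible when one of the factors (say $\pi_1$) is type~I, i.e.\ $\pi_1(G_1)''=B(H_{\pi_1})$: the commutant of $\pi_1\times\pi_2$ consists of operators commuting with $B(H_{\pi_1})\otimes\C1$, so it lies in $\C1\otimes B(H_{\pi_2})$, and then also commuting with $\C1\otimes\pi_2(G_2)$ forces it into $\C1\otimes\C1$. Moreover, $\pi_1\times\pi_2\simeq\pi_1'\times\pi_2'$ forces the factors to be equivalent (project onto either factor via compression with a rank-one operator). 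For the surjectivity of the map, let $\sigma$ be an irreducible representation of $G_1\times G_2$ on $H_\sigma$ and assume $G_1$ is of type~I. The restrictions $\sigma|_{G_1\times\{e\}}$ and $\sigma|_{\{e\}\times G_2}$ generate mutually commuting von Neumann algebras $\mc M_1$ and $\mc M_2$ whose joint commutant is $\C 1$ by irreducibility of $\sigma$. Since $G_1$ is type~I, the restriction to $G_1$ disintegrates as a direct integral of irreducible representations, and the commutation with $\mc M_2$ together with the factoriality forces $\mc M_1$ to be a type~I factor, i.e.\ $\mc M_1\simeq B(H_1)$ and $\mc M_2\simeq B(H_2)$ with $H_\sigma\simeq H_1\otimes H_2$; one then reads off irreducible representations $\pi_i$ of $G_i$ on $H_i$ with $\sigma\simeq\pi_1\times\pi_2$. (This is Fell's tensor product theorem, found for instance in Folland's \emph{Course in Abstract Harmonic Analysis}, Theorem~7.25.)

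For (3), the forward direction follows by noting that if $G_1\times G_2$ is type~I, then both $G_1$ and $G_2$ embed as quotients of closed subgroups (even as closed subgroups themselves), and type~I passes to closed subgroups with abelian quotients\ldots{} actually more directly: if $\pi_1$ is an irreducible representation of $G_1$, then $\pi_1\times 1$ is a representation of $G_1\times G_2$ generating $\pi_1(G_1)''$, and if this is type~I then so is $\pi_1(G_1)''$; varying $\pi_1$ shows $G_1$ is type~I, and similarly for $G_2$. Conversely, assuming both $G_i$ are type~I, any irreducible representation of $G_1\times G_2$ is of the form $\pi_1\times\pi_2$ by (2), so it generates $B(H_{\pi_1})\otimes B(H_{\pi_2})=B(H_{\pi_1}\otimes H_{\pi_2})$, which is type~I; since a group is type~I exactly when every irreducible representation generates a type~I von Neumann algebra (equivalently, when the universal representation is type~I, which amounts to all irreducibles being type~I and the Borel structure being standard), this gives the conclusion. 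The main obstacle is clearly part~(2), and I would simply cite Fell's theorem for the surjectivity portion rather than reproducing the direct integral argument in full.
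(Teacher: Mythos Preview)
Your treatments of (1) and (2) match the paper's: the paper gives the same intertwining unitary for (1) and, for (2), simply cites Folland's Theorem~7.25, so your additional sketch of Fell's argument is extra detail but not a different route.

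There is, however, a genuine gap in your argument for the converse direction of (3). You write that ``a group is type~I exactly when every irreducible representation generates a type~I von Neumann algebra.'' This is not a usable characterization: for \emph{any} locally compact group and \emph{any} irreducible representation $\pi$, Schur's lemma gives $\pi(G)'=\C 1$, so $\pi(G)''=B(H_\pi)$ is automatically a type~I factor. Your observation that $\pi_1\times\pi_2$ generates $B(H_{\pi_1}\otimes H_{\pi_2})$ is therefore true but vacuous, and the parenthetical about ``all irreducibles being type~I'' suffers from the same problem. The definition of a type~I group requires that \emph{every} (not just every irreducible) unitary representation generate a type~I von Neumann algebra, and that is what must be verified.

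The paper closes this gap by using a different, and genuinely restrictive, characterization: $G$ is type~I if and only if $\pi(C^*(G))\supseteq\mc K(H_\pi)$ for every irreducible $\pi$ (Dixmier, \S9.1). With this in hand, the paper shows that $(\pi_1\times\pi_2)(C^*(G_1\times G_2))$ contains the algebraic tensor product $\mc K(H_{\pi_1})\otimes\mc K(H_{\pi_2})$ (because each $G_i$ is type~I), hence its closure $\mc K(H_{\pi_1}\otimes H_{\pi_2})$, since the image of a $C^*$-homomorphism is closed. Combined with (2), this gives the conclusion. To repair your argument you should either switch to this $C^*$-algebraic criterion or else work with arbitrary (or factor) representations rather than irreducible ones.
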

\begin{proof}
\mbox{}

(1)
The unitary operator $L^2(G_1)\otimes L^2(G_2)\to L^2(G_1\times G_2)$ sending $f\otimes g$ to $f\times g$ intertwines $\lambda_{G_1\times G_2}$ and $\lambda_{G_1}\times\lambda_{G_2}$.

(2)
This can for instance be found in \cite[Theorem~7.25]{MR1397028}.

(3)
It is clear that quotients of type~I groups are again of type~I. The converse, that direct products of type~I groups are of type~I, can be found in \cite[p.~200]{MR0056611} in the case of second countable groups. We also provide an alternative proof of this.

The universal C$^*$-algebra of a group $G$ is denoted $C^*(G)$. Recall that there is a natural correspondence between irreducible representations of $G$ and of $C^*(G)$. To see that ${G_1\times G_2}$ is of type~I, recall that a group is of type~I if and only if the image of its universal C$^*$-algebra  under any irreducible representation contains the compact operators (see \cite[\S9.1]{MR0458185}).

For a Hilbert space $H$, let $\mc K(H)$ denote the compact operators on $H$. If $\pi_i$ is an irreducible representation of $G_i$ on the Hilbert space $H_i$, then $\pi_1\times\pi_2(C^*(G_1\times G_2))$ contains the algebraic tensor product $\mc K(H_1)\otimes \mc K(H_2)$, since $G_1$ and $G_2$ are of type~I. Since this algebraic tensor product is dense in the $\mc K(H_1\otimes H_2)$, and since the image of a representation of a C$^*$-algebra is closed, this shows that $\pi_1\times\pi_2(C^*(G_1\times G_2))$ also contains $\mc K(H_1\otimes H_2)$.

By the first part, every irreducible representation of $C^*(G_1\times G_2)$ is of the form $\pi_1\times\pi_2$, where $\pi_i\in\hat{ G_i}$, so this shows that $G_1\times G_2$ is of type~I.
\end{proof}

\begin{exam}\label{exam:product1}
As an example, to show that Condition~\ref{cond:irrep-single} is not stable under forming direct products, consider the $ax + b$ group,
$$
G = \left\{
\begin{pmatrix}
	a & b \\
	0 & 1
\end{pmatrix} \middle\vert\
a>0,\ b\in\R
\right\}.
$$
The unitary dual $\hat G$ is well-known (see e.g. \cite[Section~6.7]{MR1397028}). It consists of two infinite dimensional representations $\pi^+$ and $\pi^-$ contained in the regular representation $\lambda_G$ and a family of characters $\chi_t$ ($t\in\R$) where $\chi_t(a,b) = a^{it}$. Each of these characters annihilate the non-compact subgroup
$$
H = \left\{
\begin{pmatrix}
	1 & b \\
	0 & 1
\end{pmatrix} \middle\vert\
b\in\R
\right\}.
$$
Clearly, Condition~\ref{cond:irrep-single} is satisfied for the group $G$.

Consider now the group $G\times G$. The irreducible representations $\chi_t\otimes\pi^+$ have non-compact kernels contained in $G\times\{1\}$, whereas the the irreducible representations $\pi^+\otimes\chi_t$ have non-compact kernels contained in $\{1\}\times G$. Therefore none of these representations are subrepresentations of the regular representation $\lambda_{G\times G}$ of $G\times G$. However, it also shows that there is no common non-compact subgroup contained in the intersection of the kernels of irreducible representations not contained in the left regular representation $\lambda_{G\times G}$. This shows that Condition~\ref{cond:irrep-single} is not satisfied for the group $G\times G$, even though Condition~\ref{cond:irrep-single} is satisfied for each of the factors.
\end{exam}

It turns out that the group $G\times G$ nevertheless still satisfies \eqref{eq:AB0}. We will return to this in Example~\ref{exam:product2}. The idea is to weaken Condition~\ref{cond:irrep-single} in Theorem~\ref{thm:irrep-single} and therefore improve the theorem. We thus introduce Condition~\ref{cond:irrep-countable} for a locally compact group $G$:

\begin{enumerate}
	\item[\ref{cond:irrep-countable}.] There is a countable family $\mc H$ of non-compact closed subgroups of $G$ such that each irreducible unitary representation of $G$ is either trivial on some $H\in\mc H$ or is a subrepresentation of the left regular representation of $G$.
\end{enumerate}

Clearly, Condition~\ref{cond:irrep-countable} is weaker then Condition~\ref{cond:irrep-single}. We will prove in Theorem~\ref{thm:irrep-countable} that, for second countable locally compact groups $G$ of type~I, Condition~\ref{cond:irrep-countable} is still sufficient to conclude $A(G) = B_0(G)$.

The important difference between Condition~\ref{cond:irrep-single} and Condition~\ref{cond:irrep-countable} is that Condition~\ref{cond:irrep-countable} is preserved under direct products (of type~I groups), as the following proposition shows.

\begin{prop}\label{prop:product}
Let $G_1$ and $G_2$ be type~I groups satisfying Condition~\ref{cond:irrep-countable}. Then $G_1\times G_2$ is of type~I and satisfies Condition~\ref{cond:irrep-countable}.
\end{prop}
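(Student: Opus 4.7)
The plan is to reduce the statement to the dictionary provided by Lemma~\ref{lem:product}, which pins down both the type~I property and the irreducible representations of a product in terms of the factors.

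First I would dispatch the type~I assertion immediately by invoking Lemma~\ref{lem:product}(3): since $G_1$ and $G_2$ are type~I, so is $G_1\times G_2$. The real content is to produce a suitable countable family of non-compact closed subgroups of $G_1\times G_2$ verifying Condition~\ref{cond:irrep-countable}.

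Let $\mc H_1$ and $\mc H_2$ be countable families of non-compact closed subgroups of $G_1$ and $G_2$, respectively, witnessing Condition~\ref{cond:irrep-countable} for each factor. The natural candidate is
\[
\mc H = \{\, H_1\times\{e_2\} \mid H_1\in\mc H_1\,\}\cup\{\,\{e_1\}\times H_2 \mid H_2\in\mc H_2\,\},
\]
which is a countable family of non-compact closed subgroups of $G_1\times G_2$. To verify the defining property, let $\pi$ be an irreducible unitary representation of $G_1\times G_2$. Since $G_1$ and $G_2$ are of type~I, Lemma~\ref{lem:product}(2) gives $\pi\simeq \pi_1\times\pi_2$ for some $\pi_i\in\hat{G_i}$. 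I would then split into cases according to which alternative of Condition~\ref{cond:irrep-countable} each $\pi_i$ satisfies.

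If $\pi_1$ is trivial on some $H_1\in\mc H_1$, then $\pi_1\times\pi_2$ is trivial on $H_1\times\{e_2\}\in\mc H$; symmetrically if $\pi_2$ is trivial on some $H_2\in\mc H_2$. The remaining case is that both $\pi_i$ are subrepresentations of $\lambda_{G_i}$; then $\pi_1\times\pi_2$ is a subrepresentation of $\lambda_{G_1}\times\lambda_{G_2}$, which by Lemma~\ref{lem:product}(1) is unitarily equivalent to $\lambda_{G_1\times G_2}$. Either way, Condition~\ref{cond:irrep-countable} is verified for $G_1\times G_2$, completing the proof. No step is really an obstacle here: the argument is essentially a bookkeeping exercise, and the whole point of strengthening Condition~\ref{cond:irrep-single} to Condition~\ref{cond:irrep-countable} was precisely to accommodate the two ``coordinate hyperplane'' families $H_1\times\{e_2\}$ and $\{e_1\}\times H_2$ that naturally arise in products, and which cannot be collapsed to a single subgroup (as Example~\ref{exam:product1} demonstrated).
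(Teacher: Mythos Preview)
Your proposal is correct and follows essentially the same approach as the paper: define $\mc H$ as the union of the families $H_1\times\{1\}$ and $\{1\}\times H_2$, use Lemma~\ref{lem:product} to decompose an arbitrary irreducible as $\pi_1\times\pi_2$, and split into cases according to whether each factor is a subrepresentation of the regular representation or trivial on some member of $\mc H_i$. The paper's proof is organized slightly differently (it first treats the case where both $\pi_i$ embed in $\lambda_{G_i}$, then the remaining case), but the argument is identical in substance.
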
%
\begin{proof}
Set $G = G_1\times G_2$. For $i=1,2$ let $\mc H_i$ be a countable family of non-compact closed subgroups of $G_i$ such that every irreducible representation of $G_i$ is either trivial on some $H\in\mc H_i$ or is a subrepresentation of the regular representation of $G_i$. Define $\mc H$ as the collection of the groups $H_1\times\{1\}$ and $\{1\}\times H_2$ where $H_1\in\mc H_1$ and $H_2\in\mc H_2$. Clearly, $\mc H$ is countable, and every group in $\mc H$ is non-compact and closed in $G$. From Lemma~\ref{lem:product}, every irreducible representation of $G$ is of the form $\pi_1\times\pi_2$ where $\pi_i$ is an irreducible representation of $G_i$ ($i=1,2$). If $\pi_i$ is a subrepresentation of $\lambda_{G_i}$ for both $i=1,2$, then by Lemma~\ref{lem:product}, $\pi$ is a subrepresentation of $\lambda_G = \lambda_{G_1}\times\lambda_{G_2}$. Otherwise, $\pi_1$ (say) is trivial on some $H_1\in\mc H_1$ and $\pi$ is trivial on $H_1\times\{1\}\in\mc H$.
\end{proof}

\section{Coefficient spaces}\label{sec:coefficient-spaces}
The purpose of this section is to prove the following claim, which will be used in the proof of Theorem~\ref{thm:irrep-countable} (or more precisely Lemma~\ref{lem:split}): if a sum of positive definite functions vanishes at infinity then each summand also vanishes at infinity. We feel it is natural to study this problem in the context of von Neumann algebras.

Let $M$ be a von Neumann algebra with predual $M_*$. For a subset $I\subseteq M$, define the annihilator of $I$ inside $M_*$ to be
$$
I_\perp = \{\phi\in M_* \mid \phi(x) = 0 \text{ for every } x\in I\}.
$$
\begin{prop}\label{prop:annihilator}
Let $\pi\colon M\to N$ be a surjective, normal $^*$-homomorphism between von Neumann algebras $M$ and $N$. The map $N_*\to M_*$ defined by $\psi\mapsto\psi\circ\pi$ is an isometric isomorphism of $N_*$ onto $(\ker\pi)_\perp$.
\end{prop}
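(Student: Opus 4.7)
My plan is to establish the four required properties of the map $T\colon\psi\mapsto\psi\circ\pi$ in sequence: well-definedness into $M_*$, image contained in $(\ker\pi)_\perp$, isometry (which gives injectivity for free), and surjectivity onto $(\ker\pi)_\perp$.

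First, I would observe that $T$ is well-defined as a map $N_*\to M_*$: since $\pi$ is normal (i.e.\ $\sigma$-weakly continuous) and $\psi$ is a $\sigma$-weakly continuous functional on $N$, the composition $\psi\circ\pi$ is $\sigma$-weakly continuous on $M$, hence lies in $M_*$. Linearity is immediate. For every $x\in\ker\pi$ we have $(T\psi)(x)=\psi(0)=0$, so $T(N_*)\subseteq(\ker\pi)_\perp$, and $\|T\psi\|\le\|\psi\|$ because $\pi$ is contractive.

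Next, to promote this contraction to an isometry, I would invoke the general fact that a surjective $^*$-homomorphism between C$^*$-algebras is a quotient map: the induced map $M/\ker\pi\to N$ is an isometric $^*$-isomorphism. Hence for every $y\in N$ with $\|y\|\le 1$ and every $\varepsilon>0$, there exists $x\in M$ with $\pi(x)=y$ and $\|x\|\le 1+\varepsilon$, so that $|\psi(y)|=|(T\psi)(x)|\le(1+\varepsilon)\|T\psi\|$; taking the supremum over $y$ gives $\|\psi\|\le\|T\psi\|$. Injectivity of $T$ is automatic from isometry (or directly from surjectivity of $\pi$).

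For surjectivity onto $(\ker\pi)_\perp$, given $\phi\in(\ker\pi)_\perp$ I would define $\psi\colon N\to\C$ by $\psi(y)=\phi(x)$ for any $x\in M$ with $\pi(x)=y$; this is well-defined because $\phi$ annihilates $\ker\pi$ and $\pi$ is surjective, and the same quotient-map argument shows $\|\psi\|\le\|\phi\|$. By construction $T\psi=\phi$, so the only non-trivial point is that $\psi$ belongs to $N_*$, i.e.\ is normal. The main obstacle is precisely this normality; everything else is pure Banach-space duality. I would handle it by the standard structure theorem that a $\sigma$-weakly closed two-sided ideal in $M$ has the form $Mz$ for a unique central projection $z\in M$. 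Thus $\ker\pi=Mz$, and $\pi$ factors as the cut-down $x\mapsto x(1-z)$ followed by a normal $^*$-isomorphism $M(1-z)\xrightarrow{\sim}N$. The restriction map $\phi\mapsto\phi|_{M(1-z)}$ identifies $(\ker\pi)_\perp$ isometrically with the predual $(M(1-z))_*$ (since $\phi(x)=\phi(x(1-z))$ for $\phi\in(Mz)_\perp$, and the inclusion $M(1-z)\hookrightarrow M$ is normal), and composing with the isomorphism $M(1-z)\cong N$ transports $\phi$ to a normal functional on $N$, which is exactly $\psi$. An alternative, slightly less structural route would be to apply the Krein--Smulian theorem and lift a bounded weak-$^*$ convergent net in $N$ to a bounded net in $M$ via the quotient property, but the central-projection argument is cleaner.
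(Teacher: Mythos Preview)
Your proof is correct. The well-definedness, isometry, and definition of the inverse functional $\psi$ match the paper almost verbatim. The only substantive divergence is in how you establish normality of $\psi$: you invoke the structure theorem for $\sigma$-weakly closed ideals, writing $\ker\pi = Mz$ for a central projection $z$, so that $\pi$ factors through the normal $^*$-isomorphism $M(1-z)\cong N$ and $\psi$ is visibly the transport of the normal functional $\phi|_{M(1-z)}$. The paper instead proves normality of $\psi$ directly by showing $\ker\psi$ is weak$^*$ closed, using exactly the Krein--Smulian route you mention as an alternative: $\ker\psi\cap B_N = \pi(\ker\phi\cap B_M)$ is the image under the weak$^*$ continuous map $\pi$ of a weak$^*$ compact set, hence weak$^*$ closed. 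Your central-projection argument is arguably more transparent and exposes the underlying geometry (the splitting $M = Mz\oplus M(1-z)$), at the cost of importing a slightly heavier structural fact; the paper's argument stays closer to first principles of duality and compactness. Both are standard and equally valid here.
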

\begin{proof}
Let $I = \ker\pi$ denote the kernel of $\pi$. Any normal functional $\psi\in N_*$ induces a normal functional $\psi\circ\pi\in M_*$ that annihilates $I$. Since $\pi$ maps the closed unit ball of $M$ onto that of $N$, it is clear that $\|\psi\circ\pi\| = \|\psi\|$.

Conversely, any functional $\phi\in M_*$ that annihilates $I$ induces a well-defined functional $\bar\phi$ on $N$ given by $\bar\phi(\pi(x)) = \phi(x)$, where $x\in M$. It is clear that $\bar\phi\circ\pi = \phi$, so to finish the proof, we just need to show that $\bar\phi$ is normal, i.e., that $\bar\phi\in N_*$.

We show that $\ker\bar\phi$ is weak$^*$ closed, which certainly implies normality of $\bar\phi$. By the Krein-Smulian theorem (see \cite[12.6]{MR768926}), we need only show that $\ker\bar\phi\cap B_N$ is weak$^*$ closed, where $B_N$ denotes the closed unit ball of $N$.

It is clear that $\ker\bar\phi = \pi(\ker\phi)$. Also, $\pi$ maps the closed unit ball $B_M$ in $M$ surjectively onto the closed unit ball $B_N$ in $N$. So
$$
\ker\bar\phi\cap B_N = \pi(\ker\phi) \cap \pi(B_M) = \pi(\ker\phi\cap B_M).
$$
The set $\ker\phi$ is weak$^*$ closed, as $\phi$ is normal. The unit ball $B_M$ is weak$^*$ compact (Banach-Alaoglu's theorem), so by normality of $\pi$ we conclude that $\ker\bar\phi\cap B_N$ is weak$^*$ compact and hence weak$^*$ closed. This completes the proof.
\end{proof}

Let $C^*(G)^{**}$ denote the enveloping von Neumann algebra of the universal group C$^*$-algebra $C^*(G)$ of $G$.

Let $(\pi,\mc H_\pi)$ be a unitary representation of $G$ which we also view as a representation of $C^*(G)$. We denote the image $\pi(C^*(G))$ by $C^*_\pi(G)$ and its weak operator closure by $\VN_\pi(G)$.

Then there is unique normal representation $\tilde\pi\colon C^*(G)^{**}\to B(\mc H_\pi)$ extending $\pi$ and such that $\pi(C^*(G)^{**}) = \VN_\pi(G)$. The kernel of $\tilde\pi$ is a weak$^*$ closed ideal in $C^*(G)^{**}$.

Generalizing Eymard's definition of the Fourier algebra \cite{MR0228628}, Arsac introduced the \emph{coefficient space} $A_\pi$ of a representation $(\pi,\mc H_\pi)$ in \cite{MR0300106,MR0444833}. When $\lambda$ is the regular representation, $A_\lambda$ is the Fourier algebra. In general, the coefficient space $A_\pi$ is defined as the norm closed subspace of $B(G)$ generated by the coefficient functions of $\pi$, i.e., generated by the functions
$$
g\mapsto \la\pi(g)x,y\ra \qquad (g\in G),
$$
where $x,y\in\mc H_\pi$. The space $A_\pi$ can be identified with the predual of the von Neumann algebra $\VN_\pi(G)$ by
$$
\la \pi(f),\phi\ra = \int_G \phi(g)f(g)dg
$$
for $\phi\in A_\pi$ and $f\in L^1(G)$.

\begin{prop}\label{prop:face}
Let $G$ be a locally compact group with a unitary representation $\pi$. If $(\phi_n)_{n\in\N}$ is a sequence of positive definite functions on $G$ such that $\sum_n\phi_n\in A_\pi$, then $\phi_n\in A_\pi$ for every $n$.
\end{prop}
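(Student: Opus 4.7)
The plan is to identify $A_\pi$ explicitly as a subspace of $B(G) = (C^*(G)^{**})_*$ cut out by a single central projection in $C^*(G)^{**}$, and then to exploit Cauchy--Schwarz for positive functionals. Viewing $B(G)$ as the predual of $C^*(G)^{**}$, Proposition~\ref{prop:annihilator} applied to the normal surjection $\tilde\pi\colon C^*(G)^{**}\to\VN_\pi(G)$ identifies $A_\pi$ isometrically with $(\ker\tilde\pi)_\perp$. Since $\ker\tilde\pi$ is a weak$^*$-closed two-sided ideal in the von Neumann algebra $C^*(G)^{**}$, a standard fact produces a (unique) central projection $p\in C^*(G)^{**}$ with $\ker\tilde\pi = C^*(G)^{**}p = pC^*(G)^{**}$.

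The heart of the argument will be to show that a positive definite $\phi\in B(G)$ lies in $A_\pi$ precisely when $\phi(p) = 0$. One direction is immediate from $p\in\ker\tilde\pi$. For the converse, view $\phi$ as a normal positive functional on $C^*(G)^{**}$ and invoke Cauchy--Schwarz:
$$
|\phi(xp)|^2 \leq \phi(xx^*)\phi(p) = 0, \qquad x\in C^*(G)^{**},
$$
which forces $\phi$ to annihilate $C^*(G)^{**}p = \ker\tilde\pi$ and hence to belong to $A_\pi$.

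Given this characterization, the conclusion is essentially immediate. Each $\phi_n$ is positive definite, so $\|\phi_n\|_B = \phi_n(e)$, and the hypothesis that $\sum_n \phi_n \in A_\pi \subseteq B(G)$ forces norm-convergence of the series in $B(G)$. Since evaluation at $p$ is continuous on the predual,
$$
0 = \Bigl(\sum_n \phi_n\Bigr)(p) = \sum_n \phi_n(p),
$$
with each summand nonnegative because $\phi_n$ is positive and $p\geq 0$. Hence $\phi_n(p) = 0$ for every $n$, and the previous step yields $\phi_n\in A_\pi$.

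The main obstacle I foresee is the positive-functional characterization: recognizing that the condition $\phi\in A_\pi$, which a priori concerns an entire ideal, collapses to the single scalar equation $\phi(p) = 0$ once $\phi$ is assumed positive. Conceptually this is the statement that the positive cone of $A_\pi$ is a face of the positive cone of $B(G)$, which explains the label \emph{face} of the proposition; once it is established, positivity of each $\phi_n$ together with the continuity of evaluation at $p$ does all the remaining work.
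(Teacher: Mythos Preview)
Your argument is correct and follows essentially the same strategy as the paper's: identify $A_\pi$ with $(\ker\tilde\pi)_\perp$ via Proposition~\ref{prop:annihilator}, then use positivity to pass from vanishing of the sum to vanishing of each term. The only difference is cosmetic---the paper tests each $\phi_n$ against every positive element of $\ker\tilde\pi$ (using that a C$^*$-algebra is spanned by its positive part), whereas you collapse the test to the single central projection $p$ and recover annihilation of the whole ideal via Cauchy--Schwarz.
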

\begin{proof}
The representation $\pi$ extends to a normal representation $\tilde\pi\colon C^*(G)^{**}\to\VN_\pi(G)$. Let $I$ denote the kernel of $\tilde\pi$ inside $C^*(G)^{**}$. Then
$$
I_\perp = \{\phi\in B(G) \mid \la x,\phi\ra = 0 \text{ for every } x\in I\}.
$$
Proposition~\ref{prop:annihilator} identifies $\VN_\pi(G)_*\simeq I_\perp$. We also have the identification of $\VN_\pi(G)_*$ with the subset $A_\pi$ in $B(G)$. Examining the definitions, one checks that the corresponding identification $A_\pi\simeq I_\perp$ is merely equality $A_\pi = I_\perp$. Being a C$^*$-algebra, $I$ is the linear span of its positive elements, and we may also write
$$
A_\pi = \{\phi\in B(G) \mid \la x,\phi\ra = 0 \text{ for every positive } x\in I\}.
$$
Let $x\in I$ be a positive operator. As $\phi_n$ is positive definite, we have $\la x,\phi_n\ra \geq 0$. Now, if $\sum_n\phi_n\in A_\pi$, then
$
\sum_n\la x,\phi_n\ra = \la x,\sum_n\phi_n\ra = 0,
$
and we must have $\la x,\phi_n\ra = 0$ for every $n$. It follows that $\phi_n\in A_\pi$ for every $n$.
\end{proof}

\section{Improving Theorem~\ref{thm:irrep-single}}\label{sec:improvement}

As we saw in Section~\ref{sec:products}, Theorem~\ref{thm:irrep-single} does not suffice to establish \eqref{eq:AB0} for groups such as $G\times G$, when $G$ is the $ax+b$ group. The current section rectifies this problem by establishing an improvement of Theorem~\ref{thm:irrep-single}.

\begin{lem}\label{lem:split}
Let $G$ be a locally compact group. Suppose every unitary representation $\pi$ of $G$ is a sum $\rho\oplus\sigma_1\oplus\sigma_2\oplus\cdots$, where each $\sigma_j$ is trivial on some non-compact, closed subgroup $H_j$ and $\rho$ is a subrepresentation of a multiple of the regular representation. Then $A(G) = B_0(G)$.
\end{lem}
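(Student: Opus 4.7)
The plan is to reduce to positive definite functions and then split them along the given decomposition. Start with an arbitrary $\phi\in B_0(G)$. Since $B_0(G)$ is spanned by the positive definite functions it contains, we may assume $\phi$ itself is positive definite. Let $(\pi_\phi,\mc H_\phi,\xi)$ be the cyclic GNS representation of $\phi$, so that $\phi(g)=\la\pi_\phi(g)\xi,\xi\ra$. By hypothesis we decompose $\pi_\phi=\rho\oplus\bigoplus_{j}\sigma_j$ with $\rho$ a subrepresentation of a multiple of $\lambda=\lambda_G$ and each $\sigma_j$ trivial on a non-compact closed subgroup $H_j$. Splitting the cyclic vector accordingly as $\xi=\xi_\rho\oplus\bigoplus_j\xi_j$ yields a decomposition $\phi=\phi_\rho+\sum_j\phi_j$ as a (norm-convergent) sum of positive definite functions, where $\phi_\rho(g)=\la\rho(g)\xi_\rho,\xi_\rho\ra$ and $\phi_j(g)=\la\sigma_j(g)\xi_j,\xi_j\ra$.

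The term $\phi_\rho$ is a coefficient of a subrepresentation of a multiple of $\lambda$. Since a multiple of $\lambda$ is quasi-equivalent to $\lambda$, the coefficient space satisfies $A_\rho\subseteq A_\lambda=A(G)$, so $\phi_\rho\in A(G)$. Each $\phi_j$ is left $H_j$-invariant: for $h\in H_j$ one has $\phi_j(hg)=\la\sigma_j(g)\xi_j,\sigma_j(h^{-1})\xi_j\ra=\la\sigma_j(g)\xi_j,\xi_j\ra=\phi_j(g)$, using that $\sigma_j(h)=1$.

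Next I want to conclude that every $\phi_j$ is itself in $B_0(G)$. Since $\phi_\rho\in A(G)\subseteq B_0(G)$ and $\phi\in B_0(G)$, the series $\sum_j\phi_j=\phi-\phi_\rho$ lies in $B_0(G)$. The Rajchman algebra $B_0(G)$ is a closed translation-invariant subspace of $B(G)$, so by Arsac's correspondence it coincides with a coefficient space $A_{\pi_0}$ of some representation $\pi_0$ of $G$. Proposition~\ref{prop:face} applied to $A_{\pi_0}=B_0(G)$ and the sequence of positive definite functions $(\phi_j)$ therefore gives $\phi_j\in B_0(G)\subseteq C_0(G)$ for every $j$.

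Finally, I combine the last two observations: $\phi_j$ is left $H_j$-invariant and vanishes at infinity on $G$. Because $H_j$ is closed and non-compact in $G$, for each fixed $g\in G$ one can choose a net $h_\alpha\in H_j$ leaving every compact subset of $H_j$, and then $h_\alpha g$ leaves every compact subset of $G$ (using that $H_j$ is closed); hence $\phi_j(g)=\phi_j(h_\alpha g)\to 0$, i.e.\ $\phi_j\equiv 0$. Consequently $\phi=\phi_\rho\in A(G)$, which proves $B_0(G)\subseteq A(G)$; the reverse inclusion is automatic. The main (and only nontrivial) obstacle is the splitting step, i.e.\ producing the representation $\pi_0$ with $A_{\pi_0}=B_0(G)$ so that Proposition~\ref{prop:face} can be invoked; everything else is bookkeeping on coefficients.
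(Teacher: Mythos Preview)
Your proof is correct and follows essentially the same route as the paper's: reduce to positive definite $\phi$, split along the given decomposition of the associated representation, observe $\phi_\rho\in A(G)$, use Proposition~\ref{prop:face} together with the identification of $B_0(G)$ as a coefficient space to get each $\phi_j\in C_0(G)$, and then kill $\phi_j$ by its $H_j$-invariance. The only cosmetic difference is that you obtain $B_0(G)=A_{\pi_0}$ via Arsac's characterization of closed two-sided translation-invariant subspaces of $B(G)$, whereas the paper quotes Jolissaint's explicit construction of the $C_0$-enveloping representation $\pi_0$; both give exactly what is needed to invoke Proposition~\ref{prop:face}.
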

\begin{proof}
Let $\phi\in B(G)$ be a positive definite function, and suppose $\phi\in C_0(G)$. We can write $\phi$ in the form $\phi(g) = \la\pi(g)x,x\ra$ for some representation $(\pi,\mc H_\pi)$ and a vector $x\in\mc H_\pi$. We split the representation $\pi$ as a sum $\pi = \rho\oplus\sigma_1\oplus\sigma_2\oplus\cdots$, according to the assumption. We split $\phi = \phi_0 + \phi_1 + \phi_2 +\cdots$ accordingly, where $\phi_0$ is a coefficient of $\rho$ and $\phi_j$ is a coefficient of $\sigma_j$ ($j\geq 1$).

As $\rho$ is a subrepresentation of a multiple of the regular representation, $\phi_0\in A(G)$. In particular, $\phi_0\in C_0(G)$. It follows that $\sum_{j\geq 1}\phi_j \in C_0(G)$. We claim that $\phi_j\in C_0(G)$ for every $j\geq 1$.

In \cite[Proposition~2.2]{MR3211015}, Jolissaint shows that $B_0(G)$ is the coefficient space $A_{\pi_0}$ of the so-called $C_0$-enveloping representation $\pi_0$ of $G$. The claim is therefore a special case of Proposition~\ref{prop:face}.

Since $\sigma_j$ is trivial on the non-compact, closed subgroup $H_j$, $\phi_j$ must be constant on cosets of $H_j$. As these cosets are all closed and non-compact, and since we have just argued that $\phi_j\in C_0(G)$, we must in fact have $\phi_j = 0$ for every $j$. In conclusion, $\phi = \phi_0 \in A(G)$.

In general, any $\phi\in B_0(G)$ is a linear combination of positive definite functions in $C_0(G)$ (see \cite[Proposition~2.1]{MR3211015}). We have just shown that each of these positive definite functions must belong to $A(G)$, and therefore also $\phi\in A(G)$. This proves the inclusion $B_0(G)\subseteq A(G)$, and the proof is complete.
\end{proof}

\begin{thm}\label{thm:irrep-countable}
Let $G$ be a second countable locally compact group. Suppose $G$ is of type~I and satisfies the following condition:
\begin{enumerate}
	\item[\namedlabel{cond:irrep-countable}{II}.] There is a countable family $\mc H$ of non-compact closed subgroups of $G$ such that each irreducible unitary representation of $G$ is either trivial on some $H\in\mc H$ or is a subrepresentation of the left regular representation of $G$.
\end{enumerate}
Then
$$
A(G) = B_0(G).
$$
\end{thm}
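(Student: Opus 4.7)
The plan is to verify the hypothesis of Lemma~\ref{lem:split}: every unitary representation $\pi$ of $G$ admits a decomposition $\pi = \rho \oplus \sigma_1 \oplus \sigma_2 \oplus \cdots$ in which $\rho$ is a subrepresentation of a multiple of $\lambda_G$ and each $\sigma_j$ is trivial on some non-compact closed subgroup of $G$. The natural source of such a decomposition is the central direct integral decomposition of $\pi$ over $\hat G$, which is available because $G$ is second countable and of type~I, so that $\hat G$ is a standard Borel space.

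First, partition $\hat G$ into countably many Borel pieces indexed by $\mc H$. Enumerate $\mc H = \{H_1, H_2, \ldots\}$ and set
$$
E_0 = \{\pi' \in \hat G \mid \pi' \preceq \lambda_G\}, \qquad \hat G_{H_j} = \{\pi' \in \hat G \mid \pi'(h) = I \text{ for all } h \in H_j\}.
$$
The set $E_0$ is countable, since it consists precisely of the atoms of the Plancherel measure on $\hat G$, and a $\sigma$-finite measure on a standard Borel space has at most countably many atoms. Each $\hat G_{H_j}$ is Borel: choosing a countable dense subset of $H_j$, the defining condition $\pi'(h) = I$ reduces to countably many Borel conditions in the Mackey Borel structure. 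Disjointifying, set $E_j = \hat G_{H_j} \setminus (E_0 \cup E_1 \cup \cdots \cup E_{j-1})$ for $j \geq 1$. Condition~\ref{cond:irrep-countable} is exactly the statement that $\hat G = \bigsqcup_{j \geq 0} E_j$.

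Given $\pi$, use the type~I central decomposition to write $\pi \simeq \int^\oplus_{\hat G} m(\pi') \cdot \pi' \, d\mu(\pi')$, and restrict the measure $\mu$ to each $E_j$ to obtain the splitting $\pi = \rho \oplus \sigma_1 \oplus \sigma_2 \oplus \cdots$, where $\rho$ integrates over $E_0$ and $\sigma_j$ integrates over $E_j$ for $j \geq 1$. Since $E_0$ is countable, $\rho$ is the honest direct sum $\bigoplus_{\pi' \in E_0 \cap \mr{supp}\,\mu} m(\pi') \pi'$, each of whose summands is a subrepresentation of $\lambda_G$ by the definition of $E_0$, and therefore $\rho$ is a subrepresentation of a countable multiple of $\lambda_G$. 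For $j \geq 1$, each fiber in the integral defining $\sigma_j$ is trivial on $H_j$, hence so is $\sigma_j$. Applying Lemma~\ref{lem:split} then yields $A(G) = B_0(G)$.

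The main obstacle is the measure-theoretic bookkeeping: one has to confirm that the sets $\hat G_{H_j}$ and $E_0$ are genuinely Borel in the Mackey structure, and that restricting a direct integral of irreducibles to a Borel subset of $\hat G$ produces a subrepresentation of $\pi$ inheriting the relevant properties (contained in $\lambda_G$, or trivial on $H_j$). These points are standard consequences of the direct integral theory for type~I second countable groups, but the proof should cite them explicitly rather than reprove them.
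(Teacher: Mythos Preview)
Your proposal is correct and follows essentially the same route as the paper: partition $\hat G$ into the discrete series piece $E_0$ and the countably many Borel pieces indexed by $\mc H$, disintegrate an arbitrary representation accordingly, and invoke Lemma~\ref{lem:split}. The paper differs only in cosmetic details---it first reduces an arbitrary $\pi$ to a cyclic (hence separable) one before disintegrating, and it obtains the countability of your $E_0$ by disintegrating $\lambda_G$ itself rather than by citing the fact that $E_0$ is the set of Plancherel atoms.
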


\begin{proof}
We enumerate the groups in $\mc H$ as $\mc H = \{H_1,H_2,\ldots\}$. Since $G$ is of type~I, the unitary dual $\hat G$ is a standard Borel space.

First, we show that the left regular representation $\lambda$ of $G$ is completely reducible. Since $\lambda$ acts on a separable Hilbert space and $G$ is of type~I, we may write $\lambda$ as a direct integral,
$$
\lambda = \int_{\hat G}^\oplus m_\pi \pi \ d\nu(\pi),
$$
where $\nu$ is a Borel measure on $\hat G$ and $m_\pi\in\{1,2,\ldots,\infty\}$ (see \cite[Theorem 7.40]{MR1397028}). Let
$$
A_j = \{ \pi\in\hat G \mid \pi(h) = 1 \text{ for all } h\in H_j \}.
$$
It is routine to verify that $A_j\subseteq \hat G$ is a Borel set for the Mackey Borel structure. Since $\lambda$ has no subrepresentation which is trivial on a non-compact subgroup, each $A_j$ must be a $\nu$-null set. Let $B = \hat G \setminus\bigcup_{j=1}^\infty A_j$. Then
$$
\lambda = \int_B^\oplus m_\pi \pi \ d\nu(\pi).
$$
We note that if $\pi\in B$, then by assumption $\pi$ is a subrepresentation of $\lambda$. It follows (e.g. from \cite[Corollary~6]{K-fourier}) that $B$ is countable. Hence $\lambda$ is the direct sum:
$$
\lambda = \bigoplus_{\pi\in B} m_\pi \pi.
$$
Next, let $\sigma$ be an arbitrary unitary representation of $G$. We will show that $\sigma$ decomposes as $\sigma = \rho\oplus\left(\bigoplus_j\sigma_j\right)$, where $\rho$ is contained in a multiple of the regular representation of $G$ and $\sigma_j$ is trivial on $H_j\in\mc H$. By Lemma~\ref{lem:split}, this will prove our theorem.

We reduce to the separable case: Since $\sigma$ is direct sum of cyclic representations, we might as well assume that $\sigma$ is cyclic. As $G$ is second countable, $\sigma$ then represents $G$ on a separable Hilbert space.

Then we may disintegrate $\sigma$,
$$
\sigma = \int_{\hat G}^\oplus n_\pi \pi \ d\mu(\pi),
$$
where $\mu$ is a Borel measure on $\hat G$ and $n_\pi \in\{1,2,\ldots,\infty\}$. Let
\begin{align*}
S_1 &= \{\pi \in\hat G \mid \pi(h) = 1 \text{ for all } h\in H_1 \}, \\
S_{j+1} &= \{\pi \in\hat G \mid \pi(h) = 1 \text{ for all } h\in H_{j+1} \} \setminus\bigcup_{i=1}^j S_i,
\end{align*}
and let $R = \hat G \setminus\bigcup_{j=1}^\infty S_j$, so that we have a partition (into Borel sets)
$$
\hat G = R\cup S_1\cup S_2\cup \cdots.
$$
It follows from our assumptions that $R\subseteq B$. If we define
$$
\sigma_j = \int_{S_j}^\oplus n_\pi \pi \ d\mu(\pi), \qquad \rho = \int_{R}^\oplus n_\pi \pi \ d\mu(\pi),
$$
then we have
$$
\sigma = \rho\oplus\left(\bigoplus_{j=1}^\infty\sigma_j\right).
$$
By construction, $\sigma_j$ is trivial on $H_j$. As $R$ is countable, the integral defining $\rho$ is actually a direct sum, so that $\rho$ is a subrepresentation of 
$$
\bigoplus_{\pi\in R} n_\pi \pi
$$
which in turn is a subrepresentation of $\lambda\oplus\lambda\oplus\cdots$. Hence $\rho$ is a subrepresentation of a multiple of $\lambda$. Lemma~\ref{lem:split} completes the proof, showing that $A(G) = B_0(G)$.
\end{proof}

\begin{exam}\label{exam:product2}
With Theorem~\ref{thm:irrep-countable} at our disposal, we can now finish Example~\ref{exam:product1} and show that the direct product of the $ax+b$ group with itself satisfies \eqref{eq:AB0}. In fact, since the $ax+b$ group is of type~I and satisfies Condition~\ref{cond:irrep-single}, then by Proposition~\ref{prop:product} the direct product of the $ax+b$ group with itself is of type~I and satisfies Condition~\ref{cond:irrep-countable}, and this is sufficient to conclude \eqref{eq:AB0}.
\end{exam}

\begin{rem}
In Theorem~\ref{thm:irrep-countable}, one can not replace the countable family $\mc H$ by an uncountable family: With $G = \R$, every irreducible representation of $G$ (i.e. every character) has a non-compact kernel. However, $A(G)\neq B_0(G)$.
\end{rem}

\section{A unimodular AR-group not satisfying \texorpdfstring{\eqref{eq:AB0}}{(*)}}\label{sec:AR-example}
In \cite{MR509261} Baggett and Taylor gave an example of a connected AR-group not satisfying \eqref{eq:AB0}. Their example is the non-unimodular group $\R^2\rtimes\GL(2,\R)^+$, where $\GL(2,\R)^+$ denotes the $2\times 2$ real matrices with positive determinant. At the same time they suggested that there might not be any examples of unimodular AR-groups not satisfying \eqref{eq:AB0}. However, as we shall see in Theorem~\ref{thm:AR-non-AB0} below, it is possible to find an example of a (disconnected) unimodular AR-group not satisfying \eqref{eq:AB0}. According to Theorem~\ref{thm:Baggett} it is not possible to produce an example which is both connected and unimodular. It is probably not surprising that our example is found among the totally disconnected groups. Our example is inspired by \cite{MR509261}, and one can think of our example as the totally disconnected version of the example from \cite{MR509261}.

Let $\Q_p$ be the $p$-adic field, and for $x\in\Q_p$ let $|x|_p$ denote the $p$-adic norm of $x$. Let $\Z_p = \{x\in\Q_p\mid |x|_p \leq 1\}$ be the $p$-adic integers and $\Z_p^* = \{x\in\Q_p \mid |x|_p = 1\}$ be the $p$-adic units. We assume that the Haar measure on $\Q_p$ is normalized such that $\mu(\Z_p) = 1$. Let $\Q_p^2$ be the $p$-adic plane equipped with the Haar measure arising as the product measure of the Haar measure on $\Q_p$. We will use $\mu$ to denote Haar measure on $\Q_p$ and on the plane $\Q_p^2$.

Let $\UL(2,\Q_p)$ denote the closed subgroup of $\GL(2,\Q_p)$ consisting of matrices whose determinant is a $p$-adic unit, that is, an element of $\Z_p^*$. The example we are after is the group
$$
G = \Q_p^2\rtimes\UL(2,\Q_p),
$$
where $\UL(2,\Q_p)$ acts on $\Q_p^2$ by matrix multiplication. We first establish unimodularity.

\begin{lem}
The group $\Q_p^2\rtimes\UL(2,\Q_p)$ is unimodular.
\end{lem}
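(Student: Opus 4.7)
The plan is to apply the standard formula for the modular function of a semidirect product and check each factor is trivial.

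Recall that for a semidirect product $N\rtimes H$ where $H$ acts on $N$ by topological automorphisms, the modular function satisfies
$$
\Delta_{N\rtimes H}(n,h) = \Delta_N(n)\,\Delta_H(h)\,\delta(h),
$$
where $\delta(h)>0$ is the scaling factor of the $H$-action on the Haar measure of $N$, determined by $\int_N f(h^{-1}\cdot n)\,d\mu_N(n) = \delta(h)\int_N f(n)\,d\mu_N(n)$. So it suffices to verify that each of the three factors is identically~$1$.

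First, $N = \Q_p^2$ is abelian, hence $\Delta_N \equiv 1$. Second, I would check that $H = \UL(2,\Q_p)$ is unimodular by exhibiting it as an open subgroup of the unimodular group $\GL(2,\Q_p)$: the determinant is continuous and $\Z_p^*$ is open in $\Q_p^*$, so $\UL(2,\Q_p)$ is open in $\GL(2,\Q_p)$, and the restriction of a bi-invariant Haar measure to an open subgroup is again bi-invariant. Unimodularity of $\GL(2,\Q_p)$ itself is standard: the measure $|\det g|_p^{-2}\,d\mu(g)$ (with $d\mu$ the additive Haar measure on $M_2(\Q_p)$) is both left and right invariant. Thus $\Delta_H \equiv 1$.

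Finally, for $h\in\UL(2,\Q_p)$ acting on $\Q_p^2$ by matrix multiplication, the scaling factor is $\delta(h) = |\det h|_p$. By definition of $\UL(2,\Q_p)$, $\det h\in \Z_p^*$, so $|\det h|_p = 1$. Combining the three observations,
$$
\Delta_{G}(n,h) = 1\cdot 1 \cdot 1 = 1,
$$
so $G$ is unimodular.

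The only mildly non-trivial step is the computation of $\delta(h)$; but this is just the $p$-adic analogue of the familiar fact that linear transformations scale Lebesgue measure by the absolute value of their determinant, and it is immediate from $|\det h|_p=1$ on $\UL(2,\Q_p)$ that this step contributes no obstruction.
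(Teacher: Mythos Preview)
Your proof is correct. Both your argument and the paper's rest on the same underlying formula for the modular function of a semidirect product, but the organization differs. You compute each of the three factors $\Delta_N$, $\Delta_H$, $\delta$ directly: $\Delta_H\equiv 1$ via the observation that $\UL(2,\Q_p)$ is open in the unimodular group $\GL(2,\Q_p)$, and $\delta(h)=|\det h|_p=1$ via the $p$-adic change-of-variables formula (which the paper proves separately as Lemma~\ref{lem:determinant}). The paper instead proves the stronger statement that $\UL(2,\Q_p)$ is \emph{totally unimodular} --- every continuous homomorphism into $\R$ is trivial --- by exhibiting it as an extension of the compact group $\Z_p^*$ by the perfect group $\SL(2,\Q_p)$. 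Since both $\Delta_H$ and $\delta$ are continuous homomorphisms $H\to\R_{>0}$, total unimodularity disposes of them simultaneously without any computation. Your route is more hands-on and self-contained; the paper's is slicker and avoids invoking unimodularity of $\GL(2,\Q_p)$ or the explicit determinant formula for $\delta$.
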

\begin{proof}
The determinant map $\UL(2,\Q_p)\to\Z_p^*$ is a surjective homomorphism with kernel the special linear group $\SL(2,\Q_p)$. Since $\SL(2,\Q_p)$ is its own commutator group and since $\Z_p^*$ is compact, it follows that $\SL(2,\Q_p)$ and $\Z_p^*$ are \emph{totally unimodular} in the sense that any continuous homomorphism into $\R$ is trivial. Therefore $\UL(2,\Q_p)$ is totally unimodular. Since $\Q_p^2$ is unimodular, it follows that $\Q_p^2\rtimes\UL(2,\Q_p)$ is unimodular (see \cite[p.89]{MR0175995}).
\end{proof}

As the next step, we analyze the unitary dual of $G$ and prove that $G$ is an AR-group. The unitary dual of $G$ can be determined using the Mackey Machine (see e.g. \cite[Theorem~6.42]{MR1397028} and \cite[Chapter~4]{MR3012851}). Let $N = \Q_p^2$. According to Lemma~\ref{lem:dual-action}, the dual action of $\UL(2,\Q_p)$ on $\hat N$ is under the usual isomorphism $\hat N \simeq \Q_p^2$ given by $g.y = (g^t)^{-1}y$ for $g\in\UL(2,\Q_p)$ and $y\in\Q_p^2$. Given any point $(y_1,y_2)\in\Q_p^2$ with $(y_1,y_2)\neq (0,0)$ it is a simple matter to check that the matrix
\begin{align}\label{eq:cross-section}
\sigma\binom{y_1}{y_2} = \begin{pmatrix}
	y_2 & \frac{y_1}{y_1^2 + y_2^2} \\
	-y_1 & \frac{y_2}{y_1^2 + y_2^2}
\end{pmatrix}
\end{align}
belongs to $\UL(2,\Q_p)$ and (under the dual action) sends $(0,1)$ to $(y_1,y_2)$. We conclude that $\UL(2,\Q_p^2)$ acts transitively on $\hat N \setminus \{0\}$. Thus, the dual action $\UL(2,\Q_p^2)\acts\hat N$ has only two orbits, $\{0\}$ and $\Q_p^2\setminus\{0\}$. In particular, $\UL(2,\Q_p)$ acts regularly on $N$.

The first orbit $\{0\}$ gives rise to the representations in $\hat G$ that annihilate $N$, and these representations are naturally identified with the unitary dual of $\UL(2,\Q_p)$.

We denote the second orbit $\Q_p^2\setminus\{0\}$ by $\mc O$ and choose as a representative of the $\mc O$ the element $\nu = (0,1)$. The stabilizer in $\UL(2,\Q_p)$ of $\nu$ is
$$
H_\nu = \left\{ \begin{pmatrix}
	a & b \\
	0 & 1
\end{pmatrix} \middle\vert\
b\in\Q_p,\ a\in \Z_p^*
\right\}.
$$
Note that $H_\nu$ is isomorphic to the Fell group $\Q_p\rtimes\Z_p^*$. We extend $\nu\in\hat N$ to a character $\nu^*$ defined on $G_\nu = NH_\nu$ by letting $\nu^*$ be trivial on $H_\nu$. The remaining irreducible representations of $G$ are then of the form
\begin{align}\label{eq:irred}
\Ind_{G_\nu}^G (\nu^*\otimes(\rho\circ q)),
\end{align}
where $\rho\in\hat H_\nu$ and $q\colon G_\nu\to H_\nu$ is the quotient map. The regular representation $\lambda_G$ of $G$ is
\begin{align}\label{eq:lambda-decomposition}
\lambda_G = \bigoplus_\N \bigoplus_{j\in\Z} \Ind_{G_\nu}^G (\nu^*\otimes(\rho_j\circ q)),
\end{align}
where $(\rho_j)_{j\in\Z}$ denotes the irreducible representations of $H_\nu$ that occur as subrepresentations of the regular representation of $H_\nu$ (see e.g. \cite{MR0420149}). We thus have the following.

\begin{lem}
The regular representation of the group $\Q_p^2\rtimes\UL(2,\Q_p)$ is completely reducible.
\end{lem}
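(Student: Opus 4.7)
The plan is to read the lemma off directly from the decomposition \eqref{eq:lambda-decomposition} displayed immediately above the statement: each summand $\Ind_{G_\nu}^G(\nu^*\otimes(\rho_j\circ q))$ on the right-hand side is an irreducible unitary representation of $G$ by the Mackey machine (which applies here because the action $\UL(2,\Q_p)\acts\hat N$ was already shown to be regular and because all groups involved are of type~I), so that the equation literally exhibits $\lambda_G$ as an internal direct sum of irreducibles, which is complete reducibility by definition.

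Most of my effort will therefore go into justifying \eqref{eq:lambda-decomposition} itself. First I would write $\lambda_G\simeq\Ind_N^G\lambda_N$ by induction in stages and disintegrate $\lambda_N$ against Haar measure $\mu$ on $\hat N\simeq\Q_p^2$ (which coincides with Plancherel measure for the abelian group $N$) to obtain $\lambda_G\simeq\int_{\hat N}^{\oplus}\Ind_N^G\chi\,d\mu(\chi)$. The singleton orbit $\{0\}$ is $\mu$-null and can be discarded; on the remaining orbit $\mc O$, on which $\UL(2,\Q_p)$ acts transitively, all the induced representations $\Ind_N^G\chi$ are mutually equivalent to $\Ind_N^G\nu$, so the integral collapses to a countably infinite multiple $\bigoplus_\N\Ind_N^G\nu$. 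Applying induction in stages through $G_\nu=N\rtimes H_\nu$ together with the standard Mackey identification $\Ind_N^{G_\nu}\nu\simeq\nu^*\otimes(\lambda_{H_\nu}\circ q)$ --- which holds because $\nu^*$ extends $\nu$ to a character of the normal subgroup $G_\nu$ --- yields $\Ind_N^G\nu\simeq\Ind_{G_\nu}^G(\nu^*\otimes(\lambda_{H_\nu}\circ q))$.

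The key external input, and the point I would flag as the main obstacle were it not already available in the literature, is the fact recalled from \cite{MR0420149} that the Fell group $H_\nu\simeq\Q_p\rtimes\Z_p^*$ is itself an AR-group, so that $\lambda_{H_\nu}=\bigoplus_{j\in\Z}\rho_j$ with each $\rho_j$ irreducible. Substituting this into the expression above and using that induction distributes over countable direct sums produces \eqref{eq:lambda-decomposition}, after which the argument concludes as indicated in the first paragraph.
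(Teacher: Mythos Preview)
Your proposal is correct and follows the same route as the paper: the paper simply asserts the decomposition \eqref{eq:lambda-decomposition}, citing \cite{MR0420149} for the complete reducibility of $\lambda_{H_\nu}$, and reads the lemma off from it; you have supplied the standard Mackey-theoretic details behind that assertion. One small wording slip: $G_\nu$ is not normal in $G$; what you actually use is that $N$ is normal in $G_\nu$ and that $\nu^*$ is a character of all of $G_\nu$, which is what permits the twist $\Ind_N^{G_\nu}\nu\simeq\nu^*\otimes\Ind_N^{G_\nu}1$.
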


From now on we will only need to consider one representation of the form \eqref{eq:irred}, namely where $\rho$ is the trivial representation of $H_\nu$. We thus consider the irreducible representation $\pi = \Ind_{G_\nu}^G \nu^*$.

Below we will need an explicit formula for $\pi$. There are several equivalent ways to express induced representations. We have chosen to follow \cite[Realization~III, p.~79]{MR3012851} since it suits our needs well. This expression coincides with the one given in \cite[p.~155]{MR1397028}.

We shall make the following obvious identifications
$$
G/G_\nu \simeq \mc O \simeq \Q_p^2 \setminus \{0\}.
$$
Note that the Haar measure on $\Q_p^2$ restricted to the orbit $\mc O$ is invariant under the action of $\UL(2,\Q_p)$ (see Lemma~\ref{lem:determinant}). Let $\sigma\colon G/G_\nu \to G$ be the continuous section of the quotient map $G\to G/G_\nu$ defined in \eqref{eq:cross-section}. Then $\sigma(y).\nu = y$ when $y\in\mc O$. If we let
$$
\beta(g,y) = \sigma(y)^{-1} g \sigma(g^{-1}.y),
$$
then the representation $\pi$ acts on the space $L^2(G/G_\nu) = L^2(\mc O) = L^2(\Q_p^2)$ as
\begin{align}\label{eq:induction}
(\pi_g f)(y) = \nu^*(\beta(g,y)) f(g^{-1}.y), \quad g\in G,\ f\in L^2(\mc O),\ y\in\mc O.
\end{align}

From now on, let $f$ be the characteristic function of the compact set $C = \Z_p\times \Z_p$, and let
$$
\psi(g) = \la \pi(g)f,f\ra.
$$
Then $\psi$ lies in $B(G)$ by definition. We claim that $\psi\notin A(G)$. If $\psi\in A(G)$, then there would exist $h\in L^2(G)$ such that
$$
\psi(g) = \la\lambda_G(g) h,h\ra \quad\text{for all } g\in G.
$$
As $\pi$ is irreducible, $f$ is a cyclic vector for $\pi$, and the restriction of $\lambda_G$ to the cyclic subspace of $L^2(G)$ spanned by $h$ would then be equivalent to $\pi$. However, we see from the decomposition \eqref{eq:lambda-decomposition} of $\lambda_G$ that $\pi$ is not a subrepresentation of $\lambda_G$. We conclude that $\psi\notin A(G)$.

Next, we aim to show that $\psi$ vanishes at infinity. This will be completed in Proposition~\ref{prop:psi}. Put
$$
\phi(m) = \mu(mC \cap C) \quad\text{for } m\in\UL(2,\Q_p).
$$
To show that $\psi\in C_0(G)$ we first prove the following essential lemma.

\begin{lem}\label{lem:phi}
The function $\phi\colon\UL(2,\Q_p)\to[0,1]$ vanishes at infinity.
\end{lem}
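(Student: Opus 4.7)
My plan is to reduce the computation of $\phi$ to the diagonal case via a Cartan-type decomposition $\UL(2,\Q_p) = K A_0 K$, where $K = \GL(2,\Z_p)$ is a compact open subgroup and $A_0 = \{\operatorname{diag}(p^a, p^{-a}) \mid a \in \Z\}$. This decomposition should follow from the Smith normal form over the principal ideal domain $\Z_p$: any $m \in \UL(2,\Q_p)$ can be written as $m = u\operatorname{diag}(p^a, p^b)v$ with $u, v \in \GL(2,\Z_p)$, and since $\det u, \det v \in \Z_p^*$, the assumption $\det m \in \Z_p^*$ forces $a + b = 0$.

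The essential feature of $K$ is that it preserves $C = \Z_p^2$: any $k \in \GL(2,\Z_p)$ has entries in $\Z_p$, so $kC \subseteq C$, and applying the same reasoning to $k^{-1}$ gives equality. Together with the fact that $|\det k|_p = 1$ (so that $k$ acts on $\Q_p^2$ by a measure-preserving transformation), a short calculation yields bi-$K$-invariance of $\phi$: for $k_1, k_2 \in K$,
\[
\phi(k_1 m k_2) = \mu(k_1 m C \cap C) = \mu(m C \cap k_1^{-1} C) = \mu(m C \cap C) = \phi(m).
\]

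Next I would compute $\phi$ on $A_0$ directly. For $d_a = \operatorname{diag}(p^a, p^{-a})$, one has $d_a C = p^a\Z_p \times p^{-a}\Z_p$, so $d_a C \cap C = (p^a\Z_p \cap \Z_p) \times (p^{-a}\Z_p \cap \Z_p)$; a quick case distinction on the sign of $a$ (noting that $p^a \Z_p \subseteq \Z_p$ when $a \geq 0$ and conversely when $a \leq 0$) yields $\phi(d_a) = p^{-|a|}$.

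To conclude, fix $\epsilon > 0$. Since $p^{-|a|} \geq \epsilon$ for only finitely many $a \in \Z$, the set $S_\epsilon = \{d \in A_0 : \phi(d) \geq \epsilon\}$ is finite. By bi-$K$-invariance and the Cartan decomposition, $\{m \in \UL(2,\Q_p) : \phi(m) \geq \epsilon\} = K S_\epsilon K$, which is compact as the continuous image of the compact set $K \times S_\epsilon \times K$ under multiplication. Hence $\phi$ vanishes at infinity. The only non-routine step is verifying the Cartan decomposition in the required form (with the constraint $a + b = 0$ imposed by the unit-determinant condition); everything else reduces to the explicit computation of $\phi$ on $A_0$ and compactness of $K$.
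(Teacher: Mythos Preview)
Your proof is correct but takes a genuinely different route from the paper's. The paper uses the Iwasawa decomposition $\UL(2,\Q_p)=BK$ with $K=\GL(2,\Z_p)$ and $B$ upper triangular: since $KC=C$, one reduces to upper triangular $m=\begin{pmatrix}a&b\\0&d\end{pmatrix}$ and then proves by a somewhat involved estimate (splitting $C$ into annuli $N_i\times N_j$ and handling the off-diagonal entry $b$ via the ultrametric inequality) that $\mu(mC\cap C)\leq 2\min\{|a|_p^{-1},|b|_p^{-1},|d|_p^{-1}\}$. You instead use the Cartan decomposition $\UL(2,\Q_p)=KA_0K$ with $A_0=\{\operatorname{diag}(p^a,p^{-a})\}$, establish bi-$K$-invariance of $\phi$, and then the computation on $A_0$ is a one-line calculation giving $\phi(d_a)=p^{-|a|}$. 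Your approach is cleaner precisely because the diagonal action on $C=\Z_p\times\Z_p$ decouples the two coordinates, whereas the Iwasawa reduction leaves an off-diagonal term that forces the paper into a case analysis. The paper's method, on the other hand, yields an explicit quantitative bound on $\phi$ in terms of the matrix entries, which your argument does not produce (though this bound is not used elsewhere in the paper). The only point worth making fully explicit in your write-up is the Smith normal form step: one must first clear denominators to land in $M_2(\Z_p)$, apply Smith normal form there, and then observe that the resulting diagonal entries are powers of $p$ times units which can be absorbed into the $K$-factors.
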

\begin{proof}
Fix $\epsilon > 0$. Consider the compact set $SK\subseteq\UL(2,\Q_p)$ where $K = \GL(2,\Z_p)$ and
$$
S = \left\{
\begin{pmatrix}
	a & b \\
	0 & d
\end{pmatrix}
\in\UL(2,\Q_p) \middle\vert |a|_p \leq\epsilon^{-1},\ |d|_p\leq\epsilon^{-1},\ |b|_p\leq\epsilon^{-1} \right\}.
$$
We claim that if $m\notin SK$ then $|\phi(m)| \leq 2\epsilon$.

Note first that $KC = C$. Since by the Iwasawa decomposition any $m\in\UL(2,\Q_p)$ may be written as $m = bk$ where $b$ is upper triangular and $k\in K$ (see e.g. \cite[Proposition~4.5.2]{MR1431508}), and
$$
\mu(mC \cap C) = \mu(bC\cap C)
$$
we may assume that $m$ is upper triangular,
$$
m = \begin{pmatrix}
	a & b \\
	0 & d
\end{pmatrix}.
$$
We will show that if $m\notin S$ then $|\phi(m)| \leq 2\epsilon$.

In the rest of the proof we abbreviate $|x|_p$ by $|x|$ and we let $\log = \log_p$. For $j\in\Z$, define sets
\begin{align*}
N_j &= \{ x\in\Q_p \mid |x| = p^j \},
\\
K_j &= \{ x\in\Q_p \mid |x| \leq p^j \} = \{0\} \cup\bigcup_{i\leq j} N_j.
\end{align*}

By \eqref{eq:Kj}, we have $\mu(K_j) = p^j$. If we write $N_{ijkl} = m(N_i\times N_j) \cap (N_k\times N_l)$ then we have
\begin{align}\label{eq:sum-ijkl}
\mu(mC\cap C) = \sum_{i,j,k,l\leq 0} \mu(N_{ijkl}).
\end{align}
Suppose $(x,y)\in N_i\times N_j$. As $m(x,y) = (ax+by,dy)$, we see that if $m(x,y)\in N_k\times N_l$ then $|dy| = p^l$. From this we see that $p^j |d| = p^l$ so that $j = l - \log |d|$. In other words,
\begin{align}\label{eq:d-est}
N_{ijkl} \neq \emptyset \implies j = l - \log |d|.
\end{align}
In \eqref{eq:sum-ijkl} we get using \eqref{eq:d-est}
\begin{align*}
\mu(mC\cap C)
&= \sum_{i,k,l\leq 0} \mu(N_{i,l-\log|d|,k,l})
\leq \mu(m(K_0\times K_{-\log |d|}))
\\&= \mu(K_0\times K_{-\log|d|})
= \mu(K_{-\log|d|}) = |d|^{-1}.
\end{align*}
Since $|\det g| = 1$ we further have $\log|a| + \log|d| = 0$. It follows from this and \eqref{eq:d-est} that
\begin{align}\label{eq:a-est}
N_{ijkl} \neq \emptyset \implies l = j - \log |a|,
\end{align}
and as above we deduce
\begin{align*}
\mu(mC\cap C)
= \sum_{i,j,k\leq 0} \mu(N_{i,j,k,j - \log |a|})
\leq \mu(K_{-\log|a|}) = |a|^{-1}.
\end{align*}

We have thus have
\begin{align*}
\mu(mC \cap C) \leq \min\{|a|^{-1}, |d|^{-1}\},
\end{align*}
and if $b = 0$ this proves $|\phi(m)| \leq \epsilon$. We may thus suppose $b\neq 0$. We will then prove
\begin{align}\label{eq:measure}
\mu(mC \cap C) \leq 2\min\{|a|^{-1}, |b|^{-1}, |d|^{-1}\}.
\end{align}

The proof of \eqref{eq:measure} is similar to the above, but it is a bit more involved.

Suppose again $(x,y)\in N_i\times N_j$. As $m(x,y) = (ax+by,dy)$, we see that if $m(x,y)\in N_k\times N_l$ then $|ax+by| = p^k$. Using \eqref{eq:ultrametric2} we see that
$$
k = \max\{i + \log|a|,j+\log|b|\} \quad\text{or}\quad i + \log|a| = j+\log|b|.
$$
It follows that
\begin{align}\label{eq:two-cases}
i \leq -\log|a|,\ j\leq -\log|b| \quad\text{or}\quad i + \log|a| = j+\log|b|.
\end{align}
Hence if $N_{ijkl}\neq \emptyset$ then \eqref{eq:two-cases} holds.
We now estimate \eqref{eq:sum-ijkl} in two parts according to \eqref{eq:two-cases}. The first part, where the equalities $i \leq -\log|a|$ and $j\leq -\log|b|$ hold, is
$$
\sum_{\substack{i,j,k,l\leq 0 \\ i\leq - \log|a| \\ j\leq -\log|b|}} \mu(N_{ijkl})
\leq
\mu(K_{-\log|b|}) = |b|^{-1}.
$$
The second part is estimated as follows. If $i + \log|a| = j+\log|b|$, then
$$
l = j - \log|a| = i - \log|b|
$$
and thus
$$
\sum_{\substack{i,j,k,l\leq 0 \\ l = i - \log|b|}} \mu(N_{ijkl})
\leq
\sum_{i,j,k\leq 0} \mu(N_{i,j,k,i-\log|b|}) \leq |b|^{-1}.
$$
Putting things together we find that
$$
\mu(mC\cap C) \leq 2|b|^{-1}.
$$
We have now established \eqref{eq:measure}, and the proof is complete.
\end{proof}

For use in the proof of Proposition~\ref{prop:psi}, we record the following elementary fact (see \cite[Section \S45]{MR0464128}).

\begin{lem}\label{lem:vanishing-uniformly}
If $X$ is a locally compact Hausdorff space and $L\subseteq C_0(X)$ is compact (in the uniform topology) then the functions in $L$ vanish uniformly at infinity. In other words, for each $\epsilon > 0$ exists a compact set $K\subseteq X$ such that $|f(x)| < \epsilon$ whenever $f\in L$ and $x\in X\setminus K$.
\end{lem}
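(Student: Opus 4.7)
The plan is to exploit the total boundedness of $L$ (a compact subset of a metric space is totally bounded in the uniform metric on $C_0(X)$) and to reduce the claim for $L$ to the defining property of $C_0$ applied to finitely many functions.

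More precisely, given $\epsilon > 0$, I would first use compactness of $L$ in the uniform topology to choose a finite $\epsilon/2$-net $\{f_1,\ldots,f_n\}\subseteq L$, so that every $f\in L$ satisfies $\|f-f_i\|_\infty < \epsilon/2$ for some $i\in\{1,\ldots,n\}$. For each $i$, since $f_i\in C_0(X)$, I would choose a compact set $K_i\subseteq X$ such that $|f_i(x)|<\epsilon/2$ for all $x\in X\setminus K_i$.

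I would then set $K = K_1\cup\cdots\cup K_n$, which is compact as a finite union of compact sets. For $f\in L$ and $x\in X\setminus K$, pick an index $i$ with $\|f-f_i\|_\infty<\epsilon/2$; then a triangle inequality
\[
|f(x)| \leq |f(x)-f_i(x)| + |f_i(x)| < \tfrac{\epsilon}{2} + \tfrac{\epsilon}{2} = \epsilon
\]
finishes the proof. There is no substantial obstacle: the argument is a standard uniform-approximation-plus-triangle-inequality manoeuvre, entirely analogous to the classical proof that a compact subset of $C_0(X)$ is equicontinuous and vanishes uniformly at infinity.
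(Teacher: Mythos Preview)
Your argument is correct: total boundedness of $L$ gives a finite $\epsilon/2$-net, and the triangle inequality against the finitely many $K_i$ does the rest. The paper does not actually prove this lemma at all --- it simply records it as an elementary fact with a reference to Munkres --- so your proposal supplies a complete and standard proof where the paper gives only a citation.
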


\begin{prop}\label{prop:psi}
The function $\psi$ vanishes at infinity.
\end{prop}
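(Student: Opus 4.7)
The plan is to decompose $\psi$ into a Fourier-theoretic object and then bound the two variables separately. Write a general element of $G$ as $g=(n,m)$ with $n\in N=\Q_p^2$ and $m\in\UL(2,\Q_p)$, and exploit $(n,m)=(n,e)(0,m)$. Using \eqref{eq:induction} together with the identity $\sigma(y).\nu=y$, a direct computation of the cocycle $\beta$ restricted to the normal subgroup $N$ gives $\nu^*(\beta((n,e),y))=\chi_0(\la y,n\ra)$, where $\chi_0$ is a fixed nontrivial character of $\Q_p$ and $\la\,\cdot\,,\,\cdot\,\ra$ is the standard self-duality pairing on $\Q_p^2$. Setting $c_m(y)=\nu^*(\beta((0,m),y))$, one arrives at
$$
\psi((n,m))=\int_{\Q_p^2}\chi_0(\la y,n\ra)\,F_m(y)\,dy,\qquad F_m(y)=c_m(y)\,f(m^{-1}.y)\,\overline{f(y)}.
$$
For fixed $m$, the function $n\mapsto\psi((n,m))$ is therefore (up to sign) the Fourier transform of $F_m\in L^1(\Q_p^2)$.

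Next I would bound $\|F_m\|_1$. Since $f=\chi_C$ and $|c_m|\equiv 1$, one has $|F_m|=\chi_{(m.C)\cap C}$, so
$$
|\psi((n,m))|\leq\|F_m\|_1=\mu((m.C)\cap C).
$$
Because the dual action differs from standard matrix multiplication only by the homeomorphism $m\mapsto(m^{-1})^t$ of $\UL(2,\Q_p)$, this last quantity equals $\phi((m^{-1})^t)$ and so vanishes at infinity in $m$ by Lemma~\ref{lem:phi}. Given $\epsilon>0$, pick a compact $K_1\subseteq\UL(2,\Q_p)$ with $|\psi((n,m))|<\epsilon$ for all $n$ whenever $m\notin K_1$.

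It remains to handle $m\in K_1$. Strong continuity of $\pi$ makes $m\mapsto\pi((0,m))f$ continuous from $\UL(2,\Q_p)$ into $L^2(\Q_p^2)$; multiplication by $\overline{f}\in L^2(\Q_p^2)$ and Cauchy-Schwarz upgrade this to continuity of $m\mapsto F_m$ into $L^1(\Q_p^2)$. Composing with the contractive Fourier transform $L^1(\Q_p^2)\to C_0(\Q_p^2)$ (which lands in $C_0$ by the Riemann--Lebesgue lemma on the locally compact abelian group $\Q_p^2$) yields continuity of $m\mapsto\psi((\cdot,m))$ from $\UL(2,\Q_p)$ into $C_0(\Q_p^2)$. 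The image of the compact set $K_1$ is then a compact subset of $C_0(\Q_p^2)$, and Lemma~\ref{lem:vanishing-uniformly} furnishes a compact $K_2\subseteq\Q_p^2$ with $|\psi((n,m))|<\epsilon$ whenever $m\in K_1$ and $n\notin K_2$. Combining the two regimes, $|\psi|<\epsilon$ off the compact set $K_2\times K_1\subseteq G$, whence $\psi\in C_0(G)$.

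The main delicate point will be the bookkeeping for the identity $\nu^*(\beta((n,e),y))=\chi_0(\la y,n\ra)$: one must trace the conventions for the dual action versus matrix multiplication and for the self-duality of $\Q_p^2$, and use Lemma~\ref{lem:determinant} to know that the $\UL(2,\Q_p)$-action preserves Haar measure so that no Jacobian appears in $F_m$. Once these ingredients are fixed, the argument is just Riemann--Lebesgue decay in $n$ packaged uniformly on the compact $K_1$ via Lemma~\ref{lem:vanishing-uniformly}, glued to the $m$-decay from Lemma~\ref{lem:phi}.
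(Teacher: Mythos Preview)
Your proposal is correct and follows essentially the same route as the paper: decompose $g=(n,m)$, identify $\psi((n,m))$ as the Fourier transform in $n$ of a function $F_m\in L^1(\Q_p^2)$, bound $\|F_m\|_1=\mu((m.C)\cap C)$ by Lemma~\ref{lem:phi} to handle large $m$, and for $m$ in a compact set use continuity of $m\mapsto F_m$, the Riemann--Lebesgue lemma, and Lemma~\ref{lem:vanishing-uniformly} to handle large $n$. The only difference is cosmetic: you obtain the $L^1$-continuity of $m\mapsto F_m$ from strong continuity of $\pi$ together with Cauchy--Schwarz, whereas the paper argues via pointwise convergence and dominated convergence; both are fine.
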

\begin{proof}
Let $\epsilon > 0$ be given. Write $g\in G$ as $g = xm$ with $x\in\Q_p^2$ and $m\in\UL(2,\Q_p)$. Using \eqref{eq:induction} we compute
\begin{align*}
|\psi(g)|
&= \left| \int_{\mc O} (\pi_g f)(y)\overline{f(y)} \ d\mu(y) \right|
\\&= \left| \int_{\mc O} \nu^*(\beta(g,y)) f(g^{-1}.y) \overline{f(y)} \ d\mu(y) \right|
\\&\leq \int_{\mc O} |f(g^{-1}.y) f(y)| \ d\mu(y)
\\&= \mu(g.C \cap C) = \phi((m^t)^{-1}).
\end{align*}
By Lemma~\ref{lem:phi} there is a compact set $\Omega\subseteq\UL(2,\Q_p)$ such that $|\psi(g)|<\epsilon$ if $g = xm\in G$ and $m\notin\Omega$. We still need to take of the variable $x$. Note that
\begin{align*}
\nu^*(\beta(g,y))
&= \nu(\sigma(y)^{-1} x \sigma(y)) \nu^*(\sigma(y)^{-1} m \sigma(g^{-1}. y))
\\&= (\sigma(y).\nu)(x) \nu^*(\sigma(y)^{-1} m \sigma(m^{-1}. y))
\\&= \la x,y\ra \nu^*(\beta(m,y)).
\end{align*}
If we define
$$
F_m(y) = \nu^*(\beta(m,y)) f(m^{-1}.y) \overline{f(y)}
$$
then $F_m \in L^1(\Q_p^2)$. The Fourier transform $\hat F_m$ satisfies 
$$
\left|\hat F_m(x)\right| = \left|\int F_m(y)\la x,y\ra \ dy \right| = |\psi(xm)|.
$$
The Fourier transform $\hat F_m$ vanishes at infinity by the usual Riemann-Lebesgue lemma, and the Fourier transform is continuous $L^1(\Q_p^2)\to C_0(\Q_p^2)$.

We claim that $m\mapsto F_m$ is continuous $\UL(2,\Q_p)\to L^1(\Q_p^2)$. Indeed, as $\nu^*$, $\beta$, and $f$ are continuous, $F_{m_n}\to F_m$ pointwise if $m_n\to m$ for a sequence $(m_n)$ in $\UL(2,\Q_p)$, and the integrable function $f$ dominates the sequence $(F_{m_n})$. An application of Lebesgue's Dominated Convergence Theorem then gives the continuity of $m\mapsto F_m$.

The set $\{\hat F_m \mid m\in\Omega\}$ is a compact subset of $C_0(\Q_p^2)$. By Lemma~\ref{lem:vanishing-uniformly} there exists a compact set $A\subseteq\Q_p^2$ such that $|\psi(x,m)| < \epsilon$ whenever $m\in\Omega$ and $x\notin A$. Thus if $(x,m)\in G\setminus (A\times\Omega)$ then $|\psi(x,m)| < \epsilon$. This proves that $\psi$ vanishes at infinity.
\end{proof}

Since we have produced an element $\psi$ in $B_0(G)$ not belonging to $A(G)$ we have proved the following.
\begin{thm}\label{thm:AR-non-AB0}
Let $G = \Q_p^2\rtimes\UL(2,\Q_p)$. Then $G$ is a unimodular AR-group such that $A(G)\neq B_0(G)$.
\end{thm}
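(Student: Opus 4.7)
My plan is to exhibit an explicit matrix coefficient $\psi \in B_0(G) \setminus A(G)$, arising from an irreducible representation of $G$ that is not weakly contained in $\lambda_G$. The theorem has three assertions (unimodular, AR, $A(G) \neq B_0(G)$), and I will address them in that order.

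Unimodularity is straightforward: $\SL(2,\Q_p)$ equals its own commutator and $\Z_p^*$ is compact, so both are totally unimodular; hence via the determinant sequence $\SL(2,\Q_p) \hookrightarrow \UL(2,\Q_p) \twoheadrightarrow \Z_p^*$ so is $\UL(2,\Q_p)$, and the semidirect product with the unimodular $\Q_p^2$ remains unimodular. For the AR property, I would run the Mackey Machine on the semidirect product. The dual action of $\UL(2,\Q_p)$ on $\widehat{\Q_p^2} \simeq \Q_p^2$ is $g.y = (g^t)^{-1} y$, and using the explicit section $\sigma$ one checks the action has just two orbits, $\{0\}$ and $\mathcal O = \Q_p^2\setminus\{0\}$; in particular the action is regular. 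The stabilizer of $\nu = (0,1)$ is the Fell-type group $H_\nu = \Q_p \rtimes \Z_p^*$, whose regular representation is known to split as a countable direct sum $\bigoplus_{j\in\Z}\rho_j$ of irreducibles. Pushing this through induction by stages, $\lambda_G$ decomposes as in~\eqref{eq:lambda-decomposition}, exhibiting $G$ as an AR-group.

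For $A(G) \neq B_0(G)$, choose $\rho$ to be the trivial representation of $H_\nu$ (which is not among the $\rho_j$ since $H_\nu$ is non-compact and abelian-by-compact), and set $\pi = \Ind_{G_\nu}^G \nu^*$. Then $\pi$ is irreducible by Mackey, but does not occur in~\eqref{eq:lambda-decomposition}. With $f = \mathbf{1}_{\Z_p\times\Z_p} \in L^2(\mathcal O)$ and $\psi(g) = \langle \pi(g)f, f\rangle$, irreducibility of $\pi$ makes $f$ cyclic, so if $\psi$ were in $A(G)$ the GNS construction would realize $\pi$ as a subrepresentation of $\lambda_G$, a contradiction. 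Hence $\psi \in B(G) \setminus A(G)$.

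The genuine work lies in showing $\psi \in C_0(G)$. Parametrize $g = xm$ with $x \in \Q_p^2$, $m \in \UL(2,\Q_p)$. Using the explicit induced-representation formula~\eqref{eq:induction}, a direct estimate yields $|\psi(xm)| \leq \mu(g.C \cap C) = \phi((m^t)^{-1})$, where $\phi(m) = \mu(mC\cap C)$. The first key lemma is that $\phi$ vanishes at infinity on $\UL(2,\Q_p)$: via the Iwasawa decomposition $\UL(2,\Q_p) = B K$ with $K = \GL(2,\Z_p)$ stabilizing $C$, I reduce to $m$ upper triangular with entries $a,b,d$ and then carry out a $p$-adic volume estimate decomposing $C$ into annuli $N_j = \{|x|_p = p^j\}$ and analyzing when $m(N_i\times N_j) \cap (N_k\times N_l)$ is non-empty via the ultrametric inequality. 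The calculation gives $\phi(m) \leq 2\min(|a|_p^{-1}, |b|_p^{-1}, |d|_p^{-1})$, so $\phi$ vanishes outside compact subsets of $\UL(2,\Q_p)$.

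This controls the $m$-variable, confining the interesting behavior to $m$ in some compact set $\Omega$; it remains to handle the $x$-variable. Using $\nu^*(\beta(xm,y)) = \langle x,y\rangle\,\nu^*(\beta(m,y))$, one recognizes $x \mapsto \psi(xm)$ as the Fourier transform of the $L^1$-function $F_m(y) = \nu^*(\beta(m,y)) f(m^{-1}.y)\overline{f(y)}$. The Riemann–Lebesgue lemma gives $\hat F_m \in C_0(\Q_p^2)$ for each $m$; continuity of $m \mapsto F_m$ in $L^1$ (via dominated convergence, since $|F_m| \leq |f|$) together with compactness of $\Omega$ makes $\{\hat F_m : m \in \Omega\}$ a compact subset of $C_0(\Q_p^2)$. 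A standard lemma then yields uniform vanishing at infinity, i.e.\ a compact $A \subset \Q_p^2$ with $|\psi(xm)| < \epsilon$ whenever $m \in \Omega$ and $x \notin A$. Combined with the $\phi$-estimate, this proves $\psi \in C_0(G)$.

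The main obstacle is the $p$-adic volume calculation for $\phi$: the argument is elementary but requires a careful case split on the three regimes of the ultrametric inequality applied to $ax+by$, and on the interplay between the valuations of $a,b,d$ forced by $|\det m|_p = 1$.
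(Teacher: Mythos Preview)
Your proposal is correct and follows the paper's proof essentially step for step: the same unimodularity argument via total unimodularity of $\SL(2,\Q_p)$ and $\Z_p^*$, the same Mackey analysis identifying the Fell-group stabilizer $H_\nu$ and the decomposition~\eqref{eq:lambda-decomposition}, the same choice $\pi = \Ind_{G_\nu}^G \nu^*$ with $f = \mathbf{1}_{\Z_p\times\Z_p}$, and the same two-stage $C_0$ argument (the $p$-adic volume estimate for $\phi$ via Iwasawa and annuli, then the Fourier-transform/Riemann--Lebesgue argument in the $x$-variable with dominated convergence and the uniform-vanishing lemma). There is nothing substantively different to report.
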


We end this section by discussing how the groups $\R^2\rtimes\GL(2,\R)^+$ and $\Q_p^2\rtimes\UL(2,\Q_p)$ fail to meet the criterion in Theorem~\ref{thm:irrep-single}. This is relevant for obtaining a better understanding of the difference between AR-groups and groups satisfying \eqref{eq:AB0}.

In both cases, the irreducible representations can be found and analyzed using Mackey's theory. The situation is particularly easy, since the groups under consideration are semidirect products $G = N\rtimes H$, where $H$ acts regularly on the abelian normal subgroup $N$. The dual action $H\acts\hat N$ has precisely two orbits: $\{0\}$ and $\hat N\setminus\{0\}$. Irreducible representations of $G$ coming from the orbit $\{0\}$ are trivial on $N$ and thus do not contribute the the regular representation.

The stabilizer $H_\nu$ in $H$ of a point $\nu\in\hat N\setminus\{0\}$ in the second orbit is a group isomorphic to an $ax+b$ group, $\R\rtimes\R_+$ in the first case and $\Q_p\rtimes\Z_p^*$ in the second case. Since such groups are AR-groups, it follows rather easily that $G$ is an AR-group (see \cite{MR509261}). However, the irreducible representations of $G$ arising from the non-zero orbit come in two flavors depending on which irreducible representation of the stabilizer subgroup $H_\nu$ is used in the induction procedure. If a representation from the discrete series of $H_\nu$ is used, then the induced representation will also be in the discrete series of $G$ (see \cite[Corollary~11.1]{MR0342641}). However, if instead a character of $H_\nu$ is used, then the induced representation on $G$ will neither be in the discrete series nor have non-compact kernel (in fact such a representation is faithful).
%
%
This phenomenon explains, at least intuitively, why $G$ is an AR-group which does not satisfy $A(G)=B_0(G)$.

\section{Some questions}

All examples so far of second countable groups satisfying \eqref{eq:AB0} match the conditions of Theorem~\ref{thm:irrep-countable}. In particular all such groups are of type~I. It would be interesting to find other examples. There are non-type~I groups with completely reducible regular representation (see \cite{MR509261}).
\begin{que}
Do there exist groups $G$ not of type~I satisfying $A(G) = B_0(G)$?
\end{que}

Related to Section~\ref{sec:products}, the following natural question was left open.

\begin{que}\label{que:products}
If $G_1$ and $G_2$ are two groups each satisfying \eqref{eq:AB0}, does the direct product $G_1\times G_2$ satisfy \eqref{eq:AB0}?
\end{que}

Recall that a locally compact group $G$ is amenable if there is a left invariant mean on $L^\infty(G)$. It is well-known that amenability is equivalent to the existence of a sequence positive definite functions in $C_c(G)$ that converges to $1$ uniformly on compact subsets of $G$. Another characterization of amenability is the existence of a bounded approximate unit in the Fourier algebra $A(G)$.

All examples so far of groups satisfying \eqref{eq:AB0} are build from compact groups and solvable groups as semidirect products. In particular all such groups are amenable.
\begin{que}
Do there exist non-amenable groups $G$ satisfying \eqref{eq:AB0}?
\end{que}


We should remark that there exist non-amenable AR-groups. An example was given in \cite{MR509261}, and the group $\Q_p^2\rtimes\UL(2,\Q_p)$ studied in Section~\ref{sec:AR-example} is also such an example. That the group $\Q_p^2\rtimes\UL(2,\Q_p)$ is non-amenable follows for instance from \cite[14.9]{MR767264}.


We show below that for groups satisfying \eqref{eq:AB0}, amenability is equivalent to the Haagerup property (see Proposition~\ref{prop:Haagerup}). Recall that a group has the Haagerup property \cite{MR1852148} if there is a net of positive definite functions $\phi_n\colon G\to\C$ such that $\phi_n\in C_0(G)$ and $\phi_n\to 1$ uniformly on compact subsets of $G$ as $n\to\infty$.

\begin{prop}\label{prop:Haagerup}
Let $G$ be a locally compact group satisfying $A(G) = B_0(G)$. Then $G$ is amenable if and only if $G$ has the Haagerup property.
\end{prop}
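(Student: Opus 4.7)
The forward implication is standard and uses no hypothesis on $G$: an amenable locally compact group admits a net of positive definite functions in $C_c(G)$ tending to $1$ uniformly on compacta (this is one of the standard characterizations of amenability, e.g.\ via Reiter's condition), and such functions automatically lie in $C_0(G)$, so $G$ has the Haagerup property. The equality $A(G)=B_0(G)$ plays no role here.

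For the converse, the strategy is to exploit the hypothesis $A(G)=B_0(G)$ to transport the Haagerup-witnessing functions into the Fourier algebra, and then recognize them as diagonal coefficients of the regular representation. Concretely, suppose $G$ has the Haagerup property and let $(\phi_n)$ be a net of positive definite functions in $C_0(G)$ converging to $1$ uniformly on compact sets. Since $\phi_n(e)\to 1$, after discarding initial indices and rescaling I may assume $\phi_n(e)=1$ for all $n$. The hypothesis now enters: each $\phi_n$ lies in $B(G)\cap C_0(G)=B_0(G)=A(G)$.

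The key input is the standard fact that every positive definite function in $A(G)$ is a diagonal matrix coefficient $\la\lambda(\cdot)\xi,\xi\ra$ of the left regular representation; this can be obtained from Arsac's theory of coefficient spaces reviewed in Section~\ref{sec:coefficient-spaces}, since the GNS representation associated with such a $\phi$ is quasi-contained in $\lambda_G$. Applied to each $\phi_n$, this produces unit vectors $\xi_n\in L^2(G)$ with $\phi_n=\la\lambda(\cdot)\xi_n,\xi_n\ra$. The identity
$$
\|\lambda(g)\xi_n-\xi_n\|_2^2 = 2-2\operatorname{Re}\phi_n(g)
$$
then shows that $(\xi_n)$ is a net of almost invariant unit vectors for $\lambda_G$, i.e.\ the trivial representation is weakly contained in $\lambda_G$. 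By Hulanicki's theorem this is equivalent to $G$ being amenable, which finishes the proof. There is no substantial obstacle in this argument; the only point deserving care is the classical identification of positive definite functions in $A(G)$ with coefficients of $\lambda_G$.
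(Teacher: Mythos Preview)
Your argument is correct. The structure matches the paper's first proof up to the point where the normalized positive definite $\phi_n$ are shown to lie in $A(G)$; the paper then concludes differently, observing (via \cite[Lemma~10.3]{Nielson-thesis} and \cite[Proposition~5.1]{MR784292}) that the $\phi_n$ form a bounded approximate identity in $A(G)$ and invoking Leptin's theorem, whereas you realize each $\phi_n$ as a diagonal coefficient $\la\lambda(\cdot)\xi_n,\xi_n\ra$ and deduce amenability from Hulanicki's weak-containment criterion. Your route is arguably more self-contained since it avoids the cited approximate-identity lemmas, at the cost of using that $\VN(G)$ is in standard form on $L^2(G)$ so that positive elements of $A(G)=\VN(G)_*$ are vector states. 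The paper also records a second proof via Jolissaint's $C_0$-enveloping representation $\pi_0$: the hypothesis forces $C^*_\lambda(G)=C^*_{\pi_0}(G)$, and amenability and the Haagerup property are characterized respectively by $C^*(G)=C^*_\lambda(G)$ and $C^*(G)=C^*_{\pi_0}(G)$.
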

\begin{proof}
Amenable groups always have the Haagerup property so we only prove the converse. Assume $G$ has the Haagerup property. Then there is a net of positive definite functions $\phi_n\colon G\to\C$ such that $\phi_n\in C_0(G)$ and $\phi_n\to 1$ uniformly on compact subsets of $G$ as $n\to\infty$. One can assume that $\phi_n(1) = 1$ for all $n$. Since $A(G) = B_0(G)$, the functions $\phi_n$ belong to $A(G)$. From \cite[Lemma~10.3]{Nielson-thesis} (see also \cite[Proposition~5.1]{MR784292}) follows that $\phi_n$ is a bounded approximate identity in $A(G)$. Thus, $G$ is amenable by \cite{MR0239002}.

Alternative proof: This is based on \cite{MR3211015}. If $G$ satisfies $A(G) = B_0(G)$, then the regular representation $\lambda$ and the enveloping $C_0$-representation, denoted $\pi_0$, are quasi-equivalent. In particular, they are weakly equivalent so that $C^*_\lambda(G) = C^*_{\pi_0}(G)$. Thus,
\begin{align*}
G \text{ is amenable} &\iff C^*(G) = C^*_\lambda(G)
\\& \iff C^*(G) = C^*_{\pi_0}(G)
\\& \iff G \text{ has the Haagerup property}.
\qedhere
\end{align*}
\end{proof}


\section*{Appendix: Basic facts about \texorpdfstring{$p$}{p}-adic matrix groups}
\renewcommand{\theequation}{A.\arabic{equation}}

In the following, we state two basic facts about $p$-adic matrices. These facts are certainly known, but in lack of a reference we have included proofs.

Throughout, $p$ will denote a fixed prime. Let $\Z_p$ denote the $p$-adic integers and $\Q_p$ the $p$-adic field, which is the field of fractions of $\Z_p$. A good and elementary introduction to $p$-adic numbers is given in \cite{MR1760253}.

We denote the $p$-adic absolute value of $x\in\Q_p$ by $|x|_p$. The following partial converse of the ultrametric property is easy to verify. For $x,y\in\Q_p$ we have
\begin{align}\label{eq:ultrametric2}
|x+y|_p < \max\{|x|_p,|y|_p\} \implies |x|_p = |y|_p.
\end{align}

Let $\mu$ be a Haar measure on $\Q_p$.
For $j\in\Z$, define the set $K_j = \{ x\in\Q_p \mid |x| \leq p^j \}$, and note that 
$$
K_j = \bigsqcup_{i=0}^{p-1} i + K_{j-1}.
$$
It follows by induction and left invariance of $\mu$ that
\begin{align}\label{eq:Kj}
\mu(K_j) = p^j\mu(K_0).
\end{align}
If $d\in\Q_p\setminus\{0\}$ with $|d|_p = p^j$ we see that $d K_0 = K_j$ and hence
\begin{align}\label{eq:modulus}
\mu(dK_0) = \mu(K_j) = p^j\mu(K_0) = |d|_p \mu(K_0).
\end{align}

\begin{lem}\label{lem:determinant}
Let $\mu$ be a Haar measure on $\Q_p^n$. For a Borel subset $A\subseteq\Q_p^n$ and a matrix $g\in\GL(n,\Q_p)$ we have
$$
\mu(gA) = |\det g\,|_p \mu(A).
$$
\end{lem}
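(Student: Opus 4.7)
The plan is to reduce to the case of elementary matrices via a standard uniqueness-of-Haar-measure argument. For each $g \in \GL(n,\Q_p)$, the set function $A \mapsto \mu(gA)$ is a regular Borel measure on $\Q_p^n$ which is translation invariant: since $g(x + A) = gx + gA$ and $\mu$ is itself translation invariant, we have $\mu(g(x+A)) = \mu(gx + gA) = \mu(gA)$. By the uniqueness of Haar measure there is a constant $c(g) > 0$ with $\mu(gA) = c(g)\mu(A)$ for every Borel set $A$. A quick calculation shows that $c \colon \GL(n,\Q_p) \to (0,\infty)$ is a group homomorphism: $c(gh)\mu(A) = \mu(ghA) = c(g)\mu(hA) = c(g)c(h)\mu(A)$, and one can choose $A$ with $0 < \mu(A) < \infty$ to cancel.

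Since $g \mapsto |\det g|_p$ is also a homomorphism $\GL(n,\Q_p) \to (0,\infty)$, it suffices to verify $c(g) = |\det g|_p$ on a generating set. By Gaussian elimination over the field $\Q_p$, the group $\GL(n,\Q_p)$ is generated by elementary matrices of three standard types: (a) permutation matrices $P_\sigma$, (b) transvections $I + t E_{ij}$ with $i \neq j$ and $t \in \Q_p$, and (c) diagonal matrices $D_i(d) = \mathrm{diag}(1,\ldots,1,d,1,\ldots,1)$ with $d \in \Q_p^*$ in the $i$-th position.

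To check the formula on each generator, use that Haar measure on $\Q_p^n$ is the $n$-fold product of Haar measure $\mu$ on $\Q_p$. Permutation matrices permute coordinates and hence preserve the product measure, in agreement with $|\det P_\sigma|_p = |\pm 1|_p = 1$. A transvection acts as $(x_1,\ldots,x_n) \mapsto (x_1,\ldots,x_i + t x_j,\ldots,x_n)$; fixing $x_j$, this is a translation in the $i$-th coordinate, so the product measure is preserved by translation invariance of $\mu$ and Fubini's theorem, matching $|\det(I + t E_{ij})|_p = 1$. Finally, $D_i(d)$ scales the $i$-th coordinate by $d$, which scales the $i$-th factor of the product measure by $|d|_p$ by \eqref{eq:modulus}, and indeed $|\det D_i(d)|_p = |d|_p$.

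The main obstacle, such as it is, is purely bookkeeping: confirming that $\GL(n,\Q_p)$ is generated by the three families of elementary matrices above. This is the standard Gaussian elimination argument, valid over any field, and presents no real difficulty. An equally short alternative is to invoke a Bruhat or LU decomposition in order to reduce a general $g$ to a product of a permutation, a unipotent upper triangular matrix, and a diagonal matrix, and then check the identity on those three pieces directly.
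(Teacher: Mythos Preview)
Your proof is correct and follows essentially the same strategy as the paper's: use uniqueness of Haar measure to obtain a multiplicative function $c(g)$, then verify $c(g)=|\det g|_p$ on a generating set of $\GL(n,\Q_p)$. The paper uses the Bruhat decomposition $\GL(n,\Q_p)=BWB$ and splits an upper triangular matrix as diagonal times unipotent, whereas you use elementary matrices via Gaussian elimination and handle transvections directly with Fubini; you even flag the Bruhat/LU alternative yourself, so the two arguments are minor variants of one another.
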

\begin{proof}
If $\nu$ denotes the measure on $\Q_p^n$ defined as $\nu(C) = \mu(gC)$, then clearly $\nu$ is a Haar measure, and hence $\nu = \delta(g)\mu$ for some constant $\delta(g) > 0$. We will argue that $\delta(g) = |\det g\,|_p$.

Recall the Bruhat decomposition $\GL(n,\Q_p) = BWB$, where $B$ denotes the upper triangular matrices in $\GL(n,\Q_p)$ and $W$ is the group of $n\times n$ permutation matrices. Clearly, $\delta(gh) = \delta(g)\delta(h)$ when $g,h\in\GL(n,\Q_p)$, so by the Bruhat decomposition we need only show that $\delta(g) = |\det g\,|_p$, when $g\in B$ and when $g\in W$.

When $g\in W$ we obviously have $\delta(g) = 1 = |\pm 1|_p = |\det g\,|_p$. If $g$ is upper triangular we write $g$ as a product $g=dm$, where $d$ is diagonal and $m$ is upper triangular with $1$ in each diagonal entry. Note that $\delta(m)^n = \delta(m^n) = \delta(1) = 1$, so $\delta(m) = 1 = |\det m\,|_p$.

It remains to prove that $\delta(d) = |\det d\,|_p$ for diagonal $d\in\GL(n,\Q_p)$. If $n = 1$, then $\mu(d \Z_p) = |d|_p \mu(\Z_p)$ by \eqref{eq:modulus}. This establishes the formula when $n = 1$. The general case, when $n \geq 1$, follows from the case $n=1$ since Haar measure on $\Q_p^n$ is the product measure of the Haar measures on $\Q_p$.

This establishes the lemma.
\end{proof}

Let $\xi_1\in\hat{\Q_p}$ be the standard character on $\Q_p$ defined as follows. Write a $p$-adic number $\sum_{j\in\Z} c_j p^j \in\Q_p$ in the $p$-adic expansion, where ${c_j\in\{0,\ldots,p-1\}}$ and only finitely many $c_j$ with $j< 0$ are non-zero. Then
$$
\xi_1\left(\sum_{j\in\Z} c_j p^j \right) = \exp\left(2\pi i \sum_{j<0} c_jp^j\right).
$$
When $y\in\Q_p$, the map $\xi_y$ defined on $\Q_p$ as
$$
\xi_y(x) = \xi_1(xy), \qquad x\in\Q_p,
$$
is also a character on $\Q_p$, and any character of $\Q_p$ is of the form $\xi_y$ for a unique $y\in\Q_p$. This gives the usual isomorphism $\hat{\Q_p}\simeq\Q_p$ (see \cite[Theorem~4.12]{MR1397028}). We will use the bracket notation for the duality:
$$
\la x,y\ra = \xi_y(x) = \xi_1(xy).
$$
The dual $\hat{\Q_p^n}$ is identified with $\Q_p^n$ coordinate-wise.

The group $\GL(n,\Q_p)$ acts on $\Q_p^n$ by matrix multiplication. This induces a the dual action of $\GL(n,\Q_p)$ on the dual $\hat{\Q_p^n}$ of $\Q_p^n$ defined in the following way. For $g\in\GL(n,\Q_p)$ and $\chi$ a character on $\Q_p$ we get a character $g.\chi$ given by $(g.\chi)(x) = \chi(g^{-1}x)$. Under the usual isomorphism $\hat{\Q_p^n}\simeq \Q_p^n$ as above, we obtain the \emph{dual action} $\GL(n,\Q_p)\acts\Q_p^n$.

\begin{lem}\label{lem:dual-action}
The dual action $\GL(n,\Q_p)\acts\Q_p^n$ is determined by $g.x = (g^t)^{-1} x$ where $g\in\GL(n,\Q_p)$ and $x\in\Q_p^n$. Here $g^t$ denotes the transpose of $g$.
\end{lem}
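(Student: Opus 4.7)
The plan is to unwind the definitions and reduce the statement to the identity $\la g^{-1}x, y\ra = \la x, (g^t)^{-1}y\ra$ for $x,y\in \Q_p^n$, which is just the transpose-adjoint relation carried across the standard self-duality of $\Q_p^n$.

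Concretely, I would first record that the coordinate-wise identification $\hat{\Q_p^n}\simeq\Q_p^n$ is given by $y\mapsto \xi_y$, where
\[
\xi_y(x) = \xi_1(x_1y_1+\cdots+x_ny_n) = \xi_1(x^t y), \qquad x,y\in \Q_p^n.
\]
(One should briefly note that this really is the isomorphism $\hat{\Q_p^n}\simeq \Q_p^n$ obtained by iterating the one-variable identification.) Then, for $g\in \GL(n,\Q_p)$, compute directly from the definition of the dual action:
\[
(g.\xi_y)(x) = \xi_y(g^{-1}x) = \xi_1\bigl((g^{-1}x)^t y\bigr) = \xi_1\bigl(x^t (g^{-1})^t y\bigr) = \xi_1\bigl(x^t (g^t)^{-1}y\bigr) = \xi_{(g^t)^{-1}y}(x).
\]
Hence $g.\xi_y = \xi_{(g^t)^{-1}y}$, which under the identification $\hat{\Q_p^n}\simeq \Q_p^n$ reads $g.y = (g^t)^{-1}y$, as claimed.

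There is really no obstacle here; the only thing to be careful about is keeping track of the transpose arising from moving $g^{-1}$ across the bilinear pairing $(x,y)\mapsto x^t y$. The statement is a purely formal consequence of how the standard self-duality of $\Q_p^n$ is set up together with the definition of the contragredient action, and the proof is a two-line manipulation of $\xi_1$.
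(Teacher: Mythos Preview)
Your proof is correct and follows essentially the same approach as the paper: both establish the transpose-adjoint identity for the bilinear pairing and read off the dual action from it. The only cosmetic difference is that you package the pairing as $\xi_1(x^t y)$ and use matrix-transpose algebra directly, whereas the paper expands $\la gx,y\ra = \la x, g^t y\ra$ coordinate-wise as a double product before specializing to $g^{-1}$.
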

\begin{proof}
We will use bracket notation for duality between elements of $\Q_p^n$. Let $g = (g_{ij})_{i,j=1}^n$ be an $n\times n$ matrix in $M_n(\Q_p)$. We first claim that
$$
\la gx,y\ra = \la x,g^t y \ra
$$
for every $x,y\in\Q_p^n$. Indeed,
\begin{align*}
\la gx,y \ra = \prod_{i=1}^n \la (gx)_i, y_i\ra = \prod_{i=1}^n\prod_{j=1}^n \la g_{ij} x_j, y_i\ra = \prod_{i,j=1}^n \xi_1(g_{ij} x_j y_i)
\end{align*}
and
\begin{align*}
\la x,g^ty\ra = \prod_{j=1}^n \la x_j , (g^t y)_j \ra = \prod_{j=1}^n \prod_{i=1}^n \la x_j, g^t_{ji} y_i \ra = \prod_{i,j=1}^n \xi_1(x_j g_{ij} y_i),
\end{align*}
which proves the claim. Now, for $g\in\GL(n,\Q_p)$ we have
$$
\la g.x , y\ra = \la x, g^{-1} y\ra = \la (g^{-1})^t x , y\ra,
$$
which proves that $g.x = (g^{-1})^t x = (g^t)^{-1} x$.
\end{proof}

\end{document}